\theoremstyle{thmstyleone}%
\theoremstyle{thmstyletwo}%
\newtheorem{remark}{Remark}%
\theoremstyle{thmstylethree}%
\newtheorem{thm}{Theorem}[section]
\newtheorem{lemma}[thm]{Lemma}
\newtheorem{prop}[thm]{Proposition}
\newcommand{\C}{\mathbb{C}}
\newcommand{\N}{\mathbb{N}}
\begin{document}

\title[Stabilization for KP-II without size restrictions]{Boundary Exponential Stabilization for the Linear KP-II equation without Critical Size Restrictions}


\author*[1]{\fnm{F.A} \sur{Gallego}}\email{fagallegor@unal.edu.co}

\author[2,3]{\fnm{J.R} \sur{Mu\~noz}}\email{juan.ricardo@ufpe.br}
\equalcont{These authors contributed equally to this work.}

\affil*[1]{\orgdiv{Departamento de Matem\'atica}, \orgname{Universidad Nacional de Colombia (UNAL)}, \orgaddress{\street{Cra 27, No. 64-60}, \city{Manizales}, \postcode{170003}, \state{Caldas}, \country{Colombia}}}

\affil[2]{\orgdiv{Departamento de Matem\'atica}, \orgname{Universidade Federal de Pernambuco}, \orgaddress{\street{S/N Cidade Universitaria}, \city{Recife}, \postcode{50740-545}, \state{PE}, \country{Brazil}}}
\affil[3]{ \orgdiv{Departament of Electrical Engineering and applied Computing}, \orgname{University of Dubrovnik}, \orgaddress{\street{Ćira Carića 4}, \city{Dubrovnik},  \postcode{20000}, \country{Croatia}}}


\abstract{In this paper, we delve into the intricacies of boundary stabilization for the linearized KP-II equation within the constraints of a bounded domain, a phenomenon known as ``critical length." Our primary aim is to design a feedback law that ensures the existence and exponential stabilization of solutions in the energy space, without length restrictions on the domain \( \Omega = (0, L) \times (0, L) \), \( L > 0 \). Furthermore, we examine the interaction between the drift term \( u_x \) under these constraints.
}

\keywords {KP-II system, Boundary stabilization, Exponential decay, Observability }



\maketitle

\section{Introduction}
The KP equation, short for the Kadomtsev-Petviashvili equation, holds great significance in the realm of partial differential equations. It emerges prominently when investigating nonlinear waves and solitons, with applications spanning water wave dynamics, plasma physics, and various theoretical physics and mathematical disciplines. This equation, independently discovered by B.B. Kadomtsev and V.I. Petviashvili in 1970 (see \cite{kp70}), takes center stage in the study of weakly nonlinear, long waves within dispersive media. Its general form is typically expressed as follows:
\begin{equation*}
u_t+ u_x +uu_x +u_{xxx} +\gamma\partial_x^{-1}(u_{yy})=0, \quad \gamma=\pm 1.
\end{equation*}
When $\gamma= -1$, we denote it as KP-I, and when $\gamma =1$, it's KP-II, inspired by the KP equation's convention. KP-II describes long gravity waves with subtle surface tension, while KP-I models capillary gravity waves, especially with strong surface tension. Notably, the KP-I equation also arises in the transonic limit (under appropriate scaling) of the defocusing nonlinear Schrodinger equation, as documented in \cite{jones}. 

In this paper, we explore boundary stabilization of the Kadomtsev-Petviashvili (KP) equation, focusing on control strategies and dissipative boundary conditions to ensure stability near domain boundaries. This is crucial in applications like fluid dynamics to prevent wave or soliton instability. Researchers use energy methods, feedback control, and simulations to achieve global solution existence and decay. The integrable nature of the KP equation adds to its intrigue, making it a challenging problem in partial differential equations and mathematical physics.

\subsection{Setting of the problem}
We focus on the boundary stabilization scenario for the KP-II equation in a bounded domain. Specifically, we investigate the implications of applying Neumann dissipation laws at the domain boundaries, aiming to understand their impact on achieving exponential stabilization and the interplay between temporal and spatial constraints. We consider parameters $L > 0$ for a positive length, $\Omega := (0, L) \times (0, L)$ to define a rectangular region in $\mathbb{R}^2$, and $t > 0$. Within this context, we explore the following closed-loop system to address these boundary stabilization challenges:

\begin{equation}\label{eq:KP}
\begin{cases}
		u_t(x,y,t) +  u_{x}(x,y,t)+  u_{xxx}(x,y,t) + \partial_x^{-1} u_{yy}(x,y,t)=0, \\ 
	u(0,y,t)=u(L,y,t)=0,\quad  u_y(x,L,t)=\beta u_x(x,L,t), \\
	u_x(L,y,t)=\alpha u_x(0,y,t), \quad u_y(x,0,t)=0,\\
	u(x,y,0)=u_0(x,y)
\end{cases}
\end{equation}
where $(x,y) \in \Omega$, $t >0$, the feedback gains $0<|\alpha|< 1$, $\beta>0$, the operator 
$\partial_x^{-1}$ is defined by  $\partial_x^{-1} \varphi(x,y,t)= \psi(x,y,t)$ such that $\psi(L,y,t)=0$ and $\psi_x(x,y,t)=\varphi(x,y,t)$.
It can be shown that the definition of operator $\partial_x^{-1}$ is equivalent to $$\partial_x^{-1} u(x,y,t) = -\int_x^L u(s,y,t)\,ds.$$
 In this context, our goal is to demonstrate  that the energy associated  to the system described by equation  \eqref{eq:KP} defined by
\begin{equation}\label{eq:KPen}
E(t) =\frac{1}{2} \int_{\Omega} u^2(t)\,dx\,dy,
\end{equation}
has an exponential decay.  To visualize the dissipation effects resulting from the boundary conditions, we multiply the first equation by $u$ and we integrate over $\Omega$,
$$\frac{d}{dt} E(t) = -\int_{\Omega} u(x,y,t) \left(u_{x}(x,y,t)+  u_{xxx}(x,y,t) + \partial_x^{-1} u_{yy}(x,y,t)\right)\,dx\,dy. $$
 Moreover, in order to control the term $\partial_x^{-1} u_{yy}$, we consider $u_y(x,y) = f_x(x,y)$ with $f(L,y)=0$. Integrating by parts, it follows that
\begin{multline*}
\frac{d}{dt} E(t) 
	 = \frac{1}{2} \int_0^L \left[ u_x^2(x,y,t) - u(x,y,t)u_{xx}(x,y,t) + \left(\partial_x^{-1} u_y(x,y,t) \right)^2 \right]_{x=0}^{x=L} \,dy 
		 \\
	\quad - \int_0^L\left[ u(x,y,t) f(x,y,t)\right]_{y=0}^{y=L} \,dx 	 
\end{multline*}
Note that the energy of equation already has a natural dissipations mechanism due the influence of transverse variations (along the $y$-axis) given by  $\left(\partial_x^{-1} u_y(0,y,t) \right)^2$. Now, by using homogeneous Dirichlet conditions in the $x$-variable would require two types of feedback dissipations at the Neumann boundary throughout the domain. When employing the boundary feedback law represented by equation \eqref{eq:KP}, energy dissipation occurs, at least in the context of the associated linear system:
\begin{equation}
\begin{aligned}
\frac{d}{dt} E(t) 
	& =  \frac{-(1-\alpha^2)}{2} \int_0^L u_x^2(0,y,t)\,dy
		- \frac{1}{2} \int_0^L \left(\partial_x^{-1} u_y(0,y) \right)^2\,dy \\
	&\quad -\beta\int_0^L u^2(x,L)\,dx \leq 0.
\end{aligned}
\end{equation}

This observation highlights the significance of boundary conditions as feedback damping mechanisms. Consequently, motivated by certain critical length phenomena in the KdV and/or the KP-II equations \cite{Rosier, capistrano2024}, the following questions arise:
\vspace{0.2cm}
\begin{center}
\textit{Does $ E(t) \rightarrow 0$ as $ t \rightarrow \infty$? If so, can we determine the decay rate? and how does one avoid the critical length
restrictions?}
\end{center}
\vspace{0.2cm}
The main result of this paper provides a positive answer for
these questions.

\subsection{State of the Art}

Significant advancements in control theory have focused on understanding how damping mechanisms impact energy dynamics in systems governed by partial differential equations, particularly in dispersive equations related to water waves in bounded domains. Well-established results demonstrate the exponential stability of equations like KdV~\cite{Zuazua2002}, Boussinesq-KdV~\cite{Pazoto2008}, and Kawahara~\cite{Araruna2012}, among others. These analyses use the Compactness-Uniqueness framework by J.L. Lions~\cite{Lions1988}. Noteworthy findings in~\cite{Cerpa2021} and~\cite{Capistrano2018} employ Urquiza and Backstepping approaches. These results utilize damping mechanisms either within the equations or at the boundaries to achieve control. 

In two noteworthy studies, \cite{ailton2021} and \cite{Panthee2011}, exponential decay was observed in the KP-II and K-KP-II equations. These investigations established equation regularity and well-posedness while showing that introducing a suitable damping term ($a(x,y)u(x,y,t)$) leads to exponential energy decay. Notably, the first exploration of stabilization techniques for two-dimensional nonlinear dispersive equations in bounded domains, following the methodology presented here, is found in \cite{Panthee2011}, which specifically examines the KP-II equation. In \cite{Panthee2011}, external damping defined by a non-negative function $a(x,y)$ results in exponential energy decay, even when $a$ vanishes on a compact set $\Theta \subset(\delta, L-\delta) \times(\delta, L-\delta)$ for some $\delta>0$. The proof relies on an observation inequality for the KP-II equation, established through the unique continuation principle in \cite{panthee} and energy estimates using multiplier techniques. In contrast, \cite{ailton2021} employs the classical dissipation-observability method and Bourgain's unique continuation principle \cite{bourgain}, utilizing an anisotropic Gagliardo-Nirenberg inequality to drop the $y$-derivative of the norm for a key estimate. In \cite{Capistrano2023}, the authors investigate the K-KP-II system with both a damping mechanism $a(x,y)u$ acting as a feedback-damping mechanism, as seen in \cite{ailton2021}, and an anti-damping mechanism, preventing energy decrease. 
To the best of our knowledge, this current study represents the inaugural achievement in boundary feedback stabilization pertaining to the Kadomtsev-Petviashvili II equation within a rectangular domain $\Omega=(0,L)\times (0,L)$, with any {restriction} in $L>0$. In contrast, recently in \cite{capistrano2024}, a boundary dissipation mechanism was constructed that generates restrictions on the length of the rectangle. In particular, the critical phenomenon appears, characterized by a nonempty discrete set, specifically 
\begin{multline*}
\mathcal{R} = \left\lbrace \frac{\pi}{4n}\sqrt{(3m_1 + 2m_2 + m_3)((m_1 -m_3)^2  -4m_2^2)(m_1+2m_2 +3m_3)} : \quad \right. \\
\left. 
n, m_1, m_2, m_3 \in \mathbb{N},\ \text{with} \ \ |m_1-m_3|> 2m_2>0.\right\rbrace.
\end{multline*}

\subsection{Notations and Main Result}

Before presenting answers to stabilization question, let us introduce the functional space that will be necessary for our analysis.  Given $\Omega \subset \mathbb{R}^2$, let us define $X^k(\Omega)$ to be the Sobolev space
\begin{multline*}
X^k(\Omega):=
\left\{
\varphi \in H^k(\Omega) \colon {\partial_x^{-1}} \varphi(x,y)= \psi(x,y)\in H^k(\Omega),  \psi(L,y)=0, \right. \\
\left. \partial_x\psi(x,y)=\varphi(x,y)
\right\}
 \end{multline*} 
endowed with the norm
$
\left\lVert \varphi \right\rVert_{X^k(\Omega)}^2 = \left\lVert \varphi \right\rVert_{H^k(\Omega)}^2 + \left\lVert {\partial_x^{-1}} \varphi \right\rVert_{H^k(\Omega)}^2.$ Let us denote $\mathcal{H}=L^2(\Omega)$ and we also define the normed space $H_x^k(\Omega)$ as
\begin{equation}\label{eq:Hxk}
H_x^k(\Omega):=
	\left\lbrace%
	 	\varphi \colon \partial_x^j \varphi \in L^2(\Omega),\ \text{for } 0\leq j \leq k
	 \right\rbrace 
\end{equation}
with the norm $\left\lVert \varphi \right\rVert_{H_x^k(\Omega)}^2 =\sum_{j=0}^k \left\lVert \partial_x^j \varphi\right\rVert_{L^2(\Omega)}^2$. Similarly, we consider the space
\begin{multline*}
X_{x}^{k}(\Omega):=\left\{
\varphi \in H_x^k(\Omega) \colon {\partial_x^{-1}} \varphi(x,y)= \psi(x,y)\in H_x^k(\Omega), \psi(L,y)=0, \right. \\
\left.\partial_x\psi(x,y)=\varphi(x,y) 
\right\}
\end{multline*}
with norm $\left\lVert \varphi \right\rVert_{X_x^k(\Omega)}^2 = \left\lVert \varphi \right\rVert_{H_x^k(\Omega)}^2 + \left\lVert {\partial_x^{-1}} \varphi \right\rVert_{H_x^k(\Omega)}^2.$
Finally,   $H_{x0}^k(\Omega)$ will denote the closure of $C_0^\infty(\Omega)$ in $H_x^k(\Omega)$. 

The next result will be repeatedly referenced in this article, and it is known as the anisotropic Gagliardo-Nirenberg inequality:
\begin{thm}[{\cite[Theorem~15.7]{besov79}}]\label{thm:Besov}
Let $\beta$ and $\alpha^{(j)}$, for $j = 1,\dots, N$, denote $n$- dimensional multi-indices with non-negative-integer-valued components. Suppose that $1 < p^{(j)} < \infty$, $1 < q < \infty$, $0 < \mu_j < 1$ with
$$
\sum_{ j = 1 }^N \mu_j = 1, \quad
\frac{1}{q} \leq \sum_{j = 1}^N \frac{\mu_j}{p^{(j)}},\quad \beta - \frac{1}{q} = \sum_{j=1}^N \mu_j\left(\alpha^{(j)} - \frac{1}{p^{(j)}}\right).
$$
Then, for $f(x)\in C_0^\infty(\mathbb{R}^n)$,
$
\left\lVert
	D^\beta f
\right\rVert_{q} 
\leq C \prod_{j=1}^N 
\left\lVert 
	D^{\alpha^{(j)}}f
\right\rVert_{p^{(j)}}^{\mu_j},
$
 where for non-negative multi-index  $\beta = (\beta_1,\dots, \beta_N)$  we denote $D^\beta$ by 
$D^\beta = D_{x_1}^{\beta_1}\dots D_{x_n}^{\beta_n}$
and 
$ D_{x_i}^{\beta_i} = \frac{\partial^{\beta_i}}{\partial x_i^{k_i}}$.
\end{thm}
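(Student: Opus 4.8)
Although the theorem is quoted from a standard reference, a proof can be organized as follows. By density it suffices to treat $f\in C_0^\infty(\R^n)$, which is already assumed. The preliminary observation is that the third (balance) identity is exactly the condition under which the two sides transform the same way under the anisotropic dilations $f(x)\mapsto f(\lambda_1 x_1,\dots,\lambda_n x_n)$; combined with the second hypothesis it forces $\sigma:=\big(\sum_{j}\mu_j/p^{(j)}\big)-1/q\ge 0$ and $\beta_i=\big(\sum_{j}\mu_j\alpha^{(j)}_i\big)-\sigma$ for \emph{every} coordinate $i$, so $\sigma$ is a coordinate-uniform slack. I would then split the argument according to whether $\sigma=0$ or $\sigma>0$.

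The balanced case $\sigma=0$ is, for $N=2$, precisely Calder\'on's complex interpolation of anisotropic homogeneous Bessel-potential (equivalently Triebel--Lizorkin) spaces: if one builds $\dot H^{s}_{p}$ from the Fourier multipliers $\prod_i|\xi_i|^{s_i}$, there is an anisotropic Mikhlin multiplier theorem, the isomorphisms $D^{\alpha^{(j)}}\colon\dot H^{s}_{p}\to\dot H^{s-\alpha^{(j)}}_{p}$ hold, and $[\dot H^{s^{(0)}}_{p^{(0)}},\dot H^{s^{(1)}}_{p^{(1)}}]_\theta=\dot H^{s}_{p}$ with $s=(1-\theta)s^{(0)}+\theta s^{(1)}$ and $1/p=(1-\theta)/p^{(0)}+\theta/p^{(1)}$; the basic inequality $\|f\|_{[X_0,X_1]_\theta}\le\|f\|_{X_0}^{1-\theta}\|f\|_{X_1}^{\theta}$ of the complex method is then the claim. (The sub-case $p^{(j)}=q=2$, where $\beta_i=\sum_j\mu_j\alpha^{(j)}_i$, is visible by hand: by Plancherel $\|D^\beta f\|_2^2=\int_{\R^n}\prod_i|\xi_i|^{2\beta_i}\,|\widehat{f}(\xi)|^2\,d\xi$, and since $\prod_i|\xi_i|^{2\beta_i}|\widehat{f}(\xi)|^2=\prod_j\big(\prod_i|\xi_i|^{2\alpha^{(j)}_i}|\widehat{f}(\xi)|^2\big)^{\mu_j}$, H\"older's inequality with the exponents $1/\mu_j$, which sum to $1$, gives $\|D^\beta f\|_2^2\le\prod_j\|D^{\alpha^{(j)}}f\|_2^{2\mu_j}$.) For $N>2$ one iterates the two-factor statement, grouping factors two at a time and checking, via the same linear computation, that every intermediate exponent still satisfies the balance relations. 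When $\sigma>0$, the estimate just obtained controls $\|D^{\beta+\sigma e}f\|_{q^{*}}$ with $e=(1,\dots,1)$ and $1/q^{*}=1/q+\sigma$, and one composes with the anisotropic Sobolev embedding $\dot H^{\beta+\sigma e}_{q^{*}}\hookrightarrow\dot H^{\beta}_{q}$, which sits at the scale-critical relation ($n\sigma$ derivatives traded for the gain $1/q^{*}-1/q=\sigma$) and is valid for $1<q^{*}<q<\infty$.

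The main obstacle is the anisotropic harmonic analysis behind the last two steps: one must establish that the anisotropic Bessel-potential / Triebel--Lizorkin scale behaves like the isotropic one — Mikhlin-type multiplier bounds, the complex-interpolation identity, the critical Sobolev embedding — and carry all conditions coordinatewise, since the balance relation is a vector identity whereas the second hypothesis is scalar. An approach that sidesteps interpolation is the integral-representation method of the source: write $f$ as a weighted average of its derivatives $D^{\alpha^{(j)}}f$ against smooth kernels adapted to the anisotropic homogeneity and estimate the resulting convolutions with Young's and H\"older's inequalities; the delicate point there is instead the construction of kernels whose homogeneities reproduce the balance condition. A purely Littlewood--Paley proof is also feasible, but the per-frequency estimate is scale-critical (no dyadic gain when $\sigma=0$), so a naive $\ell^1$-summation of the pieces fails and one must invoke the square-function characterization of $L^q$ for $1<q<\infty$.
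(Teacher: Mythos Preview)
The paper does not supply a proof of this statement; it is quoted verbatim from \cite[Theorem~15.7]{besov79} and invoked later as a black box (once, in Step~1 of the observability argument, to control $\lVert\partial_x^{-1}u_{yy}^n\rVert$ by $\lVert u_x^n\rVert$). There is therefore nothing in the paper to compare your proposal against.

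On the substance of your sketch: the overall strategy (scaling analysis to isolate the slack $\sigma$, complex interpolation for the balanced case $\sigma=0$, then an anisotropic Sobolev embedding to absorb $\sigma>0$) is a legitimate route, and you correctly note that the cited source instead uses integral representations. A few points deserve care if you flesh it out. First, in the $\sigma>0$ reduction the intermediate multi-index $\beta+\sigma e$ will generally have non-integer entries, so you are committed to the fractional Bessel-potential framework from the outset, not merely as scaffolding. Second, the spaces you build from the multiplier $\prod_i|\xi_i|^{s_i}$ are of mixed-homogeneity (dominating-mixed-derivative) type rather than the usual anisotropic Sobolev scale; the critical embedding $\dot H^{\sigma e}_{q^{*}}\hookrightarrow L^{q}$ and the Mikhlin theorem you need are true in that setting but are not simple corollaries of the isotropic versions. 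Third, for the $N>2$ iteration you must check that each intermediate pair stays in the range $1<p<\infty$ and that the componentwise balance relations are preserved at every regrouping. None of this is fatal, but each step is a theorem in its own right, so your outline is several layers removed from a self-contained proof; the integral-representation approach in \cite{besov79} is more direct precisely because it avoids this machinery.
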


The next result asserts that \eqref{eq:KP} is globally uniformly exponentially stable in
\(\mathcal{H}\). It means that the decay rate \(\theta\)  is independent of intial data. Thus, the main results of this work provide an answer to the stabilization problem proposed at the beginning of this paper.  To summarize:
\begin{thm}[Global Uniform Stabilization]\label{11th:ESLyap_2}
		Let $L>0$. Then, for every initial data 
		$u_0\in L^2(\Omega)$ the energy associated to closed-loop system \eqref{eq:KP} defined by~\eqref{eq:KPen} decays exponentially. More precisely, there exists two positives constants $\theta$ and $\kappa$ such that $$E(t) \leq \kappa E(0)e^{-2\theta t}, \quad t >0.$$
	\end{thm}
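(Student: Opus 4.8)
\emph{Step 1: abstract setting, well-posedness and hidden regularity.} I would write \eqref{eq:KP} as $u_t=\mathcal{A}u$, with $\mathcal{A}\varphi:=-(\varphi_x+\varphi_{xxx}+\partial_x^{-1}\varphi_{yy})$ and $D(\mathcal{A})$ the set of $\varphi\in X^3(\Omega)$ obeying the four boundary relations in \eqref{eq:KP}. The integration by parts already performed in the Introduction shows $\langle\mathcal{A}\varphi,\varphi\rangle_{\mathcal{H}}\le0$, so $\mathcal{A}$ is dissipative on $\mathcal{H}=L^2(\Omega)$; solving the stationary problem $\varphi-\mathcal{A}\varphi=f$ by a Galerkin/Lax--Milgram argument in a suitable energy space incorporating $\varphi(0,y)=\varphi(L,y)=0$, together with the a priori bound $\|\varphi\|_{\mathcal{H}}\le\|f\|_{\mathcal{H}}$ read off from the same identity, shows that $I-\mathcal{A}$ is onto. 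By Lumer--Phillips, $\mathcal{A}$ generates a $C_0$--semigroup of contractions, so \eqref{eq:KP}, being linear, is globally well posed in $\mathcal{H}$ and $E$ is non-increasing, with energy identity $E(0)-E(T)=\int_0^T D(t)\,dt$, where $D(t)$ is the sum of the three non-negative boundary terms from the Introduction. This identity gives, for free, the hidden boundary regularity $u_x(0,\cdot),u_x(L,\cdot),\partial_x^{-1}u_y(0,\cdot),u(\cdot,L)\in L^2((0,T)\times(0,L))$ with norms $\lesssim E(0)$; multiplying \eqref{eq:KP} by $xu$ and integrating over $\Omega\times(0,T)$ gives in addition the Kato smoothing $\int_0^T\!\!\int_\Omega u_x^2\,dx\,dy\,dt\le C(L,T,\alpha,\beta)\,E(0)$.

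\emph{Step 2: reduction to observability, and the easy range of $L$.} It suffices to find $T>0$ and $C>0$, independent of $u_0$, such that
\begin{multline*}
E(0)\le C\int_0^T\Big(\|u_x(0,\cdot,t)\|_{L^2(0,L)}^2+\|\partial_x^{-1}u_y(0,\cdot,t)\|_{L^2(0,L)}^2 \\
+\|u(\cdot,L,t)\|_{L^2(0,L)}^2\Big)\,dt ;
\end{multline*}
combined with the energy identity (and the fact that $D(t)$ controls, up to a constant depending only on $\alpha,\beta$, the integrand above) this yields $E(T)\le\rho E(0)$ for some $\rho=\rho(L,T,\alpha,\beta)\in(0,1)$, and iterating on $[kT,(k+1)T]$ with the semigroup property gives $E(t)\le\kappa E(0)e^{-2\theta t}$ with $\kappa=\rho^{-1}$, $\theta=-\tfrac1{2T}\log\rho>0$, which is Theorem \ref{11th:ESLyap_2}. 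When $L$ is small this can be done with a direct Lyapunov functional: for $V(t):=E(t)+\mu\int_\Omega x\,u^2\,dx\,dy$ and $\mu>0$ small, the multiplier $xu$ turns the dispersive term into $-3\mu\int_\Omega u_x^2$ (plus a boundary term absorbed by $D$), the transverse term into $-\mu\int_\Omega(\partial_x^{-1}u_y)^2$ plus $-2\mu\beta\int_0^L x\,u^2(x,L)\,dx\le0$ (using $\partial_x^{-1}u_y(x,0)=0$ and $\partial_x^{-1}u_y(x,L)=\beta u(x,L)$), while the drift term produces the only adverse contribution $+2\mu E(t)=\mu\int_\Omega u^2$; since $u$ vanishes on $\{x=0\}$ and $\{x=L\}$, the Poincaré inequality $\int_\Omega u^2\le(L/\pi)^2\int_\Omega u_x^2$ then gives $V'(t)\le-2\theta V(t)$ with an explicit $\theta>0$ as soon as $L<\pi\sqrt{3}$.

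\emph{Step 3: observability for arbitrary $L$.} To drop the size restriction I would prove the observability inequality by contradiction: if it fails there is a sequence $(u_0^n)$ with $E^n(0)=1$ and $\int_0^TD^n\to0$; since $E^n(t)=1-\int_0^tD^n$, we get $E^n\to1$ uniformly on $[0,T]$, so $(u^n)$ is bounded in $L^\infty(0,T;\mathcal{H})\cap L^2(0,T;H_x^1(\Omega))$ and, via the equation, $(u_t^n)$ is bounded in a negative-order space. The delicate point is that KP-II smooths only in $x$: to extract a subsequence converging \emph{strongly} in $L^2(0,T;\mathcal{H})$ one controls the $y$--increments of $u^n$ by the anisotropic Gagliardo--Nirenberg inequality of Theorem \ref{thm:Besov} (interpolating the bounded $x$--derivative norm against a lower-order norm that carries the $t$--equicontinuity from the equation), as in \cite{ailton2021}, and then concludes by an Aubin--Lions type argument. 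The limit $u$ solves the linear KP-II equation in $\Omega\times(0,T)$ with the four boundary relations of \eqref{eq:KP} and, by weak lower semicontinuity together with $\int_0^TD^n\to0$, also $u_x(0,\cdot,\cdot)\equiv0$, $\partial_x^{-1}u_y(0,\cdot,\cdot)\equiv0$, $u(\cdot,L,\cdot)\equiv0$; hence $u=u_x=0$ on $\{x=0\}\cup\{x=L\}$, $u\equiv0$ on $\{y=L\}$ and $u_y\equiv0$ on $\{y=0\}$. Strong convergence forces $\tfrac12\int_0^T\!\!\int_\Omega u^2=\lim\int_0^TE^n=T>0$, so $u\not\equiv0$; on the other hand this overdetermined lateral data forces, through Bourgain's unique continuation principle \cite{bourgain} for KP-II on the rectangle — valid for \emph{every} $L>0$ — that $u\equiv0$, a contradiction.

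\emph{Main obstacle.} Two steps carry the weight. The first is the strong compactness in $L^2(0,T;\mathcal{H})$: with no regularizing effect in $y$, the terms carrying a $y$--derivative must be absorbed via Theorem \ref{thm:Besov}, keeping the interpolation constants independent of the solution. The second — the heart of the ``no critical length'' claim — is that the drift term $u_x$ feeds a positive $\int_\Omega u^2$ into every multiplier/Lyapunov estimate, which a local argument can only defeat for $L$ below a threshold (this is where the exceptional set $\mathcal{R}$ of \cite{capistrano2024} would appear in a spectral reduction); the feedback law \eqref{eq:KP} is designed so that its three damping channels — the $(1-\alpha^2)$--weighted trace $u_x(0,\cdot)$, the built-in transverse trace $\partial_x^{-1}u_y(0,\cdot)$, and the tangential trace $u(\cdot,L)$ — overdetermine $u$ on the whole lateral boundary, so the qualitative unique continuation principle applies directly and uniformly in $L$, and no exceptional set can survive.
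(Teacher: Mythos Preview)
Your overall architecture matches the paper: semigroup well-posedness, energy dissipation with hidden regularity, Kato smoothing via the $xu$ multiplier, reduction of exponential decay to an observability inequality, and a compactness--uniqueness contradiction argument using the anisotropic Gagliardo--Nirenberg inequality to control the terms carrying $y$--derivatives and Aubin--Lions for strong $L^2(0,T;\mathcal{H})$ compactness. The Lyapunov computation you include for $L<\pi\sqrt{3}$ is a pleasant extra; the paper does not pursue it because the contradiction argument is meant to cover all $L$ at once.

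The substantive divergence is in the last move, and there your proposal has a gap. The paper does \emph{not} invoke Bourgain's unique continuation on the time-dependent limit. Instead it performs Rosier's spectral reduction: the set $N_T$ of initial data whose trajectories have vanishing observations is finite-dimensional and $A$--invariant, hence carries an eigenvector $u_0$ with $Au_0=\lambda u_0$. The resulting stationary problem, together with the overdetermined traces $u_x(0,\cdot)=u(\cdot,L)=\partial_x^{-1}u_y(0,\cdot)=0$, is then killed by a two-line energy computation (multiply by $\bar\lambda u_0$ and integrate; every boundary term vanishes, forcing $|\lambda|^2\|u_0\|^2=0$). This is entirely elementary and is exactly where the ``no critical length'' claim is discharged.

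Your alternative, applying Bourgain's UCP on the rectangle, is not justified as stated. Bourgain's result concerns solutions on $\mathbb{R}^n$ with compact spatial support at two distinct times; to use it here you would need to extend the limit $u$ by zero across $\partial\Omega$ and check that the extension still solves the equation in $\mathbb{R}^2\times(0,T)$. For a third-order operator in $x$ this requires $u=u_x=u_{xx}=0$ on $\{x=0\}\cup\{x=L\}$, and the observation only delivers the first two; similarly, on $\{y=0\}$ you have $u_y=0$ but not $u=0$. So the extension argument breaks, and the sentence ``valid for every $L>0$'' is precisely what needs proof. Replacing this step by the spectral reduction and the short integration-by-parts identity, as the paper does, closes the argument cleanly and keeps it self-contained.
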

To prove Theorem \ref{11th:ESLyap_2}, we'll employ the J.-L. Lions' compactness-uniqueness argument \cite{Lions1} to simplify the task to proving an observability inequality.

The structure of this work is as follows:  In Section~\label{s:2}, we discuss the well-
posedness of the system and some \textit{apriori} estimates. Section~\ref{s:3} explores the system's asymptotic behavior with the boundary dissipative mechanisms, without critical length restrictions. Finally, Section \ref{sec:final} is dedicated to further comments and open problems.

\section{{Well-posedness} and Main Estimates}\label{s:2}
%
%

In this context, we initiate a sequence of linear estimates that will be subsequently applied. Initially, we utilize classical semigroup theory in connection with the linear system \eqref{eq:KP}.


By using the notation of the operator $A$  we
rewrite the system \eqref{eq:KP} as an abstract evolution equation $$\frac{du}{dt}=A u,\quad u(0)=u_0$$
where $A \colon D(A)\subset \mathcal{H}\to \mathcal{H}$ is defined by
\begin{equation}\label{eq:MUA}
Au=-u_x 
		- u_{xxx}- {\partial_x^{-1}} u_{yy} 
\end{equation}
on the dense domain given by
\begin{align*}
D(A):=
	\left\lbrace
			u \in H_x^3(\Omega)\cap X^2(\Omega) \colon
			 u(0,y)=u(L,y)=0,\,  u_y(x,L)=\beta u_x(x,L), \right. \\
			\left.  u_x(L,y)= \alpha u_x(0,y), \,u_y(x,0)=0	 
	\right\rbrace,
\end{align*}
with $|\alpha| < 1, \beta >0$. In order to ascertain the soundness of the linear system, we introduce several valuable findings.
\begin{lemma}\label{le:MUAadj}
The operator $A$ is closed and the adjoint $A^\ast\colon D(A^\ast)\subset \mathcal{H}\to \mathcal{H}$ is given by $
A^\ast v =v_x+
		 v_{xxx}+ \left(\partial_x^{-1}\right)^{\ast} v_{yy}$
with dense domain
\begin{align*}
D({A}^\ast):=
	\left\lbrace
			v\in H_x^3(\Omega)\cap X^2(\Omega) \colon
			v(0,y)=v(L,y)=0,  v_y(x,L)=-\beta v_x(x,L), \right. \\
			\left. 
			 v_x(0,y)=\alpha v_x(L,y), 	\,\, v_y(x,0)=0  
	\right\rbrace
\end{align*}
with $|\alpha| < 1, \beta >0$ and the operator $\left(\partial_x^{-1}\right)^{\ast}$ is defined in Remark \ref{remark1}.

\end{lemma}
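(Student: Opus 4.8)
The plan is to prove Lemma~\ref{le:MUAadj} by the standard route for boundary-value evolution operators: first establish that $A$ is densely defined and closed, then compute $A^\ast$ by integration by parts, reading off both the formal expression and the adjoint boundary conditions from the boundary terms that must vanish.

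\medskip

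\textbf{Step 1: density and closedness of $A$.} The domain $D(A)$ contains $C_0^\infty(\Omega)$ (all boundary conditions are trivially satisfied by compactly supported functions, since they are homogeneous linear relations among traces), hence $A$ is densely defined on $\mathcal H = L^2(\Omega)$. For closedness, suppose $u_n \to u$ and $Au_n \to g$ in $\mathcal H$. The estimate one gets from the dissipation identity — namely that for $u \in D(A)$ one controls $\|u_x(0,\cdot)\|_{L^2(0,L)}^2$, $\|u(\cdot,L)\|_{L^2(0,L)}^2$ and $\|\partial_x^{-1}u_y(0,\cdot)\|_{L^2(0,L)}^2$ by $\|Au\|_{\mathcal H}\|u\|_{\mathcal H}$, together with elliptic-type regularity in $x$ coming from the $u_{xxx}$ term — shows that $(u_n)$ is Cauchy in $H_x^3(\Omega)\cap X^2(\Omega)$; the limit therefore lies in that space, the (continuous) trace operators pass to the limit so the boundary conditions persist, and $Au = g$. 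Thus $A$ is closed. A cleaner alternative, if one prefers, is to show $A$ generates a $C_0$-semigroup (dissipativity of $A$ and of $A^\ast$ via Lumer--Phillips, using $|\alpha|<1$, $\beta>0$), which gives closedness for free; but since the lemma only claims closedness, the direct argument suffices.

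\medskip

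\textbf{Step 2: formal adjoint and candidate domain.} For $u \in D(A)$ and $v$ smooth, compute $\langle Au, v\rangle_{\mathcal H} = -\int_\Omega (u_x + u_{xxx} + \partial_x^{-1}u_{yy})\,v$. Integrating by parts in $x$ moves $-u_x$ to $+v_x$ and $-u_{xxx}$ to $+v_{xxx}$, producing boundary terms on $\{x=0\}$ and $\{x=L\}$ involving $u, u_x, u_{xx}$ paired against $v, v_x, v_{xx}$; integrating by parts in $y$ (and using the definition of $\partial_x^{-1}$, i.e.\ the adjoint $(\partial_x^{-1})^\ast$ described in Remark~\ref{remark1}, which integrates from $0$ rather than $L$) moves $-\partial_x^{-1}u_{yy}$ to $+(\partial_x^{-1})^\ast v_{yy}$ with boundary terms on $\{y=0\}$ and $\{y=L\}$. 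Imposing that \emph{all} these boundary terms vanish for \emph{every} $u \in D(A)$ forces exactly the conditions defining $D(A^\ast)$: the Dirichlet conditions $v(0,y)=v(L,y)=0$ kill the terms multiplying $u_{xx}$ and (after using $u(0,y)=u(L,y)=0$) most of the rest; the remaining $x$-boundary pairing, after substituting $u_x(L,y)=\alpha u_x(0,y)$, becomes proportional to $\big(v_x(0,y) - \alpha v_x(L,y)\big)u_x(0,y)$, whence $v_x(0,y)=\alpha v_x(L,y)$; similarly the $y$-boundary pairing, after substituting $u_y(x,L)=\beta u_x(x,L)$ and $u_y(x,0)=0$, forces $v_y(x,L)=-\beta v_x(x,L)$ and $v_y(x,0)=0$. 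This shows $D(A^\ast) \supseteq$ the set displayed in the lemma and $A^\ast$ acts as claimed on it.

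\medskip

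\textbf{Step 3: the reverse inclusion.} It remains to check that $D(A^\ast)$ is not larger, i.e.\ any $v \in \mathcal H$ for which $u \mapsto \langle Au, v\rangle$ extends boundedly to $\mathcal H$ already has the stated regularity and boundary conditions. Testing first against $u \in C_0^\infty(\Omega)$ identifies $A^\ast v$ in the distributional sense as $v_x + v_{xxx} + (\partial_x^{-1})^\ast v_{yy} \in L^2(\Omega)$; elliptic regularity in the $x$-variable (the operator is third order and nondegenerate in $x$) then bootstraps $v$ into $H_x^3(\Omega)\cap X^2(\Omega)$, so all the traces appearing in Step~2 make sense. Feeding this regularity back into the integration-by-parts identity and now letting $u$ range over all of $D(A)$ (so its traces $u_x(0,\cdot)$, $u_x(L,\cdot)$, $u(\cdot,L)_x$, etc., realize an arbitrary element of the appropriate trace space subject only to the $D(A)$-relations), the boundedness of $\langle Au,v\rangle$ forces every boundary integral to vanish, yielding precisely the four boundary conditions in $D(A^\ast)$. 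The main obstacle I anticipate is purely bookkeeping rather than conceptual: correctly tracking the six $x$-boundary terms and the four $y$-boundary terms through the two integrations by parts, being careful about the nonlocal operator $\partial_x^{-1}$ (its adjoint flips the base point from $L$ to $0$, which is why the sign in the $v$-boundary conditions and the operator $(\partial_x^{-1})^\ast$ appear), and verifying that the combination $|\alpha|<1$ is exactly what makes the surviving quadratic boundary form one-signed — this last point is what ultimately reconciles dissipativity of $A$ with dissipativity of $A^\ast$ and pins down the coefficient $\alpha$ (not $1/\alpha$) in the condition $v_x(0,y)=\alpha v_x(L,y)$.
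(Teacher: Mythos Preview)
Your proposal is correct and follows the same integration-by-parts strategy as the paper. The paper's proof is much terser: it verifies only your Step~2 (computing $\langle Au,v\rangle_{\mathcal H}=\langle u,A^\ast v\rangle_{\mathcal H}$ for $u\in D(A)$ and $v$ in the claimed $D(A^\ast)$, handling the nonlocal term via auxiliary functions $f,g$ with $u_y=f_x$, $v_y=g_x$, $f(L,y)=g(0,y)=0$), omits your Step~3 entirely, and for closedness simply asserts $A^{\ast\ast}=A$ in one line rather than your direct or semigroup route.
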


\begin{remark}\label{remark1}
Observe that for the adjoint problem, the operator $\left(\partial_x^{-1}\right)^*$ is defined as $\partial_x^{-1} \varphi=\psi$ such that $\psi(0, y)=$ 0 and $\psi_x=\varphi$. This definition is equivalent to $$\left(\partial_x^{-1}\right)^* u(x, y, t)=\int_0^x u(s, y, t) d s.$$
\end{remark}

\begin{proof} We consider  $u\in D(A)$, $v \in D(A^\ast)$ and two functions $f, g$ such that $u_y(x,y)=f_x(x,y)$ and $v_y(x,y)=g_x(x,y)$ with $f(L,y)=g(0,y)=0$, it follows that
\begin{multline*}
-\int_{\Omega} v{\partial_x^{-1}}u_{yy}\,dx\,dy  = -\int_0^L \left[v(x,y)f(x,y)\right]_{y=0}^{y=L}dx 
  	-\int_0^L \left[g(x,y)u(x,y)\right]_{y=0}^{y=L}dx 
	\\
	+\int_{\Omega}  g_yu\,dx\,dy 
 = I+	\int_{\Omega}  u{\partial_x^{-1}}  v_{yy}\,dx\,dy,%
\end{multline*}
where $I$ is given by the first two terms of the preceding equation. By using the definition of the domains, it follows that 
\begin{equation}\label{boundaryforf}
\begin{cases}
f(x,0)=-\int_x^Lu_y(s,0)ds=0  \\
f(x,L)=-\int_x^Lu_y(s,L)ds=-\beta\int_x^Lu_x(s,L)ds =\beta u(x,L)  \\
\end{cases}
\end{equation}
and
\begin{equation}\label{boundaryforg}
\begin{cases}
g(x,0)=\int_0^xv_y(s,0)ds=0 \\
g(x,L)=\int_0^x v_y(s,L)ds=-\beta\int_0^x v_x(s,L)ds =-\beta v(x,L)  \\
\end{cases}
\end{equation}
Then, estimating $I$, we have that
\begin{align*}
I&=-\int_0^L  v(x,L)(\beta u(x,L))\,dx -\int_0^L (-\beta v(x,L))u(x,L)\,dx=0
\end{align*}
and
\begin{equation*}
\begin{aligned}
-\int_{\Omega} &v(x,y){\partial_x^{-1}}u_{yy}(x,y)\,dx\,dy 
=	\int_{\Omega}  u(x,y){\partial_x^{-1}}  v_{yy}(x,y)\,dx\,dy.%
\end{aligned}
\end{equation*}
Consequently, we can estimate the duality product $\left\langle
 		A u, v
 	\right\rangle_{\mathcal{H}}$. Some integration by parts allows obtain
\begin{equation*}
\begin{aligned}
	\left\langle
 		A u, v
 	\right\rangle_{\mathcal{H}}  
		&=\left\langle%
 		u, A^\ast v
 	\right\rangle_{\mathcal{H}}%
\end{aligned}
\end{equation*}
Finally, note that $A^{\ast\ast}=A$ then $A$ is closed.
\end{proof}

\begin{prop}\label{pr:MUdis}
The operator $A$ is the infinitesimal generator of a $C_0$-semigroup in $\mathcal{H}$.
\end{prop}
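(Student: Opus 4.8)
The plan is to invoke the Lumer--Phillips theorem in the form: a densely defined, closed operator $A$ on a Hilbert space which is dissipative and whose adjoint $A^\ast$ is also dissipative generates a $C_0$-semigroup of contractions. By Lemma~\ref{le:MUAadj}, $A$ is closed and densely defined, $D(A^\ast)$ is dense, and $A^\ast$ is explicitly known; so the whole task reduces to checking that $\operatorname{Re}\langle Au,u\rangle_{\mathcal H}\le 0$ for $u\in D(A)$ and $\operatorname{Re}\langle A^\ast v,v\rangle_{\mathcal H}\le 0$ for $v\in D(A^\ast)$.

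First I would verify dissipativity of $A$. For $u\in D(A)$, write $u_y=f_x$ with $f(L,y)=0$ and integrate by parts exactly as in the energy identity sketched in the introduction, using the boundary conditions that define $D(A)$ together with \eqref{boundaryforf}. This gives
\begin{equation*}
\langle Au,u\rangle_{\mathcal H}
= -\frac{1-\alpha^2}{2}\int_0^L u_x^2(0,y)\,dy
- \frac12\int_0^L\bigl(\partial_x^{-1}u_y(0,y)\bigr)^2\,dy
- \beta\int_0^L u^2(x,L)\,dx \le 0,
\end{equation*}
since $|\alpha|<1$ and $\beta>0$. Next I would check dissipativity of $A^\ast$. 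For $v\in D(A^\ast)$, set $v_y=g_x$ with $g(0,y)=0$ and repeat the integration by parts, now using the adjoint boundary conditions $v(0,y)=v(L,y)=0$, $v_x(0,y)=\alpha v_x(L,y)$, $v_y(x,0)=0$, $v_y(x,L)=-\beta v_x(x,L)$ and the identity \eqref{boundaryforg}. The same algebra (with the roles of $x=0$ and $x=L$ interchanged, and the sign change in the mixed Neumann condition absorbed) produces an analogous non-positive expression, yielding $\langle A^\ast v,v\rangle_{\mathcal H}\le 0$.

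With $A$ densely defined and closed, $A$ dissipative, and $A^\ast$ dissipative, the Lumer--Phillips theorem (e.g. Pazy, Corollary~4.4 of Chapter~1) implies that $A$ is the infinitesimal generator of a $C_0$-semigroup of contractions on $\mathcal H$, which is the claim. I expect the main obstacle to be the adjoint computation: tracking every boundary contribution, in particular the terms coming from the nonlocal operator $(\partial_x^{-1})^\ast v_{yy}$ and from the mixed condition $v_y(x,L)=-\beta v_x(x,L)$, which flips a sign relative to the direct problem yet must still combine into a favorable $-\beta\int_0^L v^2(x,L)\,dx$ term. An alternative route would be to prove directly that $\lambda I-A$ is surjective for some $\lambda>0$ by solving the resolvent boundary-value problem in $H_x^3(\Omega)\cap X^2(\Omega)$; this is feasible but noticeably heavier than verifying adjoint dissipativity, which is essentially already encoded in Lemma~\ref{le:MUAadj}.
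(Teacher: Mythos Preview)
Your proposal is correct and matches the paper's proof essentially step for step: the paper computes $\langle Au,u\rangle_{\mathcal H}$ via the same auxiliary function $f$ and obtains exactly your displayed identity, then carries out the analogous computation for $A^\ast$ with $g$ and \eqref{boundaryforg}, and concludes by citing the same result (Corollary~4.4 in Pazy). There is nothing to add.
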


\begin{proof}
We consider  $u\in D(A)$ and call $f_x(x,y)=u_y(x,y)$ with $f(L,y)=0$, it follows that
\begin{equation*}
\begin{aligned}
	\left\langle%
		Au,u%
	\right\rangle_\mathcal{H}
			&= \frac{1}{2}\int_0^L [u_x^2 (x,y)]_{x=0}^{x=L}dy -\int_0^L [f(x,y)u(x,y)]_{y=0}^{y=L}dx +\int_{\Omega}
			ff_x\,dx\,dy.
\end{aligned}
\end{equation*}
From \eqref{boundaryforf}, we conclude
\begin{align*}
	\left\langle%
		A u,u%
	\right\rangle_\mathcal{H}
				=- \frac{(1-\alpha^2)}{2}\int_0^L u_x^2(0,y)dy  - \beta \int_0^L u^2(x,L)dx 				-\frac{1}{2}\int_0^L(\partial_x^{-1} u_y(0,y))^2 dy . 
\end{align*}
Finally, we have that $\left\langle%
		Au,u 
	\right\rangle_\mathcal{H}
				  \leq 0.$   
Similarly, let $v\in D(A^\ast)$ and $g_x(x,y)=v_y(x,y)$ with $g(0,y)=0$. Then,
\begin{align*}
\left\langle%
	v, A^\ast v
\right\rangle_{\mathcal{H}}
			=- \frac{1}{2}\int_0^L [v_x^2 (x,y)]_{x=0}^{x=L}dy 
			 +\int_0^L[g(x,y)v(x,y)]_{y=0}^{y=L}
		dx -\int_{\Omega}
			gg_x
		\,dx\,dy 
\end{align*}
From \eqref{boundaryforg}, it yields that $\left\langle v, A^\ast v \right\rangle_{\mathcal{H}} \leq 0$. Thus, from  \cite[Corollary 4.4, page 15]{Pazy}, the Proposition holds. 
\end{proof}

Therefore, we are in a position to present the subsequent theorem concerning the existence of solutions for the Cauchy abstract problem:
\begin{thm}
For each initial data $u_0\in \mathcal{H}$, there exists a unique mild solution $u\in C\left([0,\infty), \mathcal{H}\right)$ for the system~\eqref{eq:KP}. Moreover, if the initial data $u_0\in D(A)$ the solutions are classical such that
\begin{equation*}
u\in C\left([0,\infty), D(A)\right)\cap C^1\left([0,\infty), \mathcal{H}
\right).
\end{equation*}
\end{thm}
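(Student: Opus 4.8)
My plan is to obtain this statement as an essentially immediate corollary of the generation result already in hand, so that no new analytic work is required. The substantive content---density of $D(A)$, closedness of $A$, and dissipativity of both $A$ and $A^\ast$---has been settled in Lemma~\ref{le:MUAadj} and Proposition~\ref{pr:MUdis}; what remains is only to translate ``$A$ generates a $C_0$-semigroup'' into the asserted existence, uniqueness, and regularity of solutions.

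First I would record that, by Proposition~\ref{pr:MUdis} together with the computations $\langle Au,u\rangle_{\mathcal{H}}\le 0$ and $\langle v,A^\ast v\rangle_{\mathcal{H}}\le 0$ established in its proof, the Lumer--Phillips theorem (\cite[Corollary~4.4, page~15]{Pazy}) yields a $C_0$-semigroup of contractions $\{S(t)\}_{t\ge 0}$ on $\mathcal{H}$ whose infinitesimal generator is $A$; in particular $\|S(t)\|_{\mathcal{L}(\mathcal{H})}\le 1$ for all $t\ge 0$. For $u_0\in\mathcal{H}$ I then set $u(t):=S(t)u_0$. Strong continuity of the semigroup gives $u\in C([0,\infty),\mathcal{H})$, and by definition $u$ is the mild solution of $\tfrac{du}{dt}=Au$, $u(0)=u_0$. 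Uniqueness is immediate: the difference of two mild solutions is a mild solution of the same equation with zero initial data, and since there is no source term the variation-of-constants characterization forces it to vanish identically.

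For the regularity claim I would use that $A$ is closed (Lemma~\ref{le:MUAadj}), so that $D(A)$ endowed with the graph norm is a Banach space, and then invoke the standard orbit-regularity theorem for $C_0$-semigroups (see \cite{Pazy}): if $u_0\in D(A)$, then $t\mapsto S(t)u_0$ belongs to $C([0,\infty),D(A))\cap C^1([0,\infty),\mathcal{H})$, is a classical solution with $\tfrac{d}{dt}S(t)u_0 = A S(t)u_0 = S(t)Au_0$, and in particular satisfies for every $t\ge 0$ the boundary conditions encoded in $D(A)$. This gives precisely the displayed regularity and completes the proof. The only point that deserves emphasis---rather than a genuine obstacle---is that the specific Neumann-type feedback conditions defining $D(A)$ (and their adjoint counterparts in $D(A^\ast)$) are exactly what makes $A$ and $A^\ast$ dissipative; once that has been verified, as it was in Proposition~\ref{pr:MUdis}, the present theorem carries no further difficulty.
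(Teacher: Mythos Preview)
Your proposal is correct and follows essentially the same approach as the paper: both obtain the theorem as an immediate corollary of Proposition~\ref{pr:MUdis} (that $A$ generates a $C_0$-semigroup of contractions on $\mathcal{H}$) together with the standard semigroup results in \cite{Pazy}. Your write-up simply spells out in more detail what the paper compresses into a single sentence and a citation.
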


\begin{proof}
From Proposition~\ref{pr:MUdis}, it follows that $A$ generates a strongly continuous semigroup of
contractions $\{S(t)\}_{t\geq 0}$ in $\mathcal{H}$ (see Corollary 1.4.4 in \cite{Pazy}). 
\end{proof}

Then we can establish the next proposition to state that the energy~\eqref{eq:KPen} is decreasing along the solutions of~\eqref{eq:KP}.
\begin{prop}\label{pr:Den}
For any mild solution of~\eqref{eq:KP} the energy $E(t)$, defined by \eqref{eq:KPen}, is non-increasing and there exists a constant $C>0$ such that
\begin{align}\label{eq:MUendis}
\frac{d}{dt}E(t) \leq -C
	\left[%
		\int_0^L u_{x}^2(0,y,t)\,dy+\int_0^L u^2(x,L) \,dx +\int_0^L({\partial_x^{-1}} u_y(0,y,t))^2\,dy 
	\right]
\end{align}
where $C=C(\alpha,\beta)$ is given by $C=\min
	\left\lbrace%
		\frac{(1-\alpha^2)}{2},  \beta 
	\right\rbrace.$
\end{prop}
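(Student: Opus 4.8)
The plan is to establish \eqref{eq:MUendis} first for classical solutions, where all the boundary integrals make literal sense, and then to propagate it to mild solutions by density. First I would take $u_0\in D(A)$, so that the corresponding solution satisfies $u\in C([0,\infty);D(A))\cap C^1([0,\infty);\mathcal{H})$; then $t\mapsto E(t)$ is $C^1$ and, differentiating \eqref{eq:KPen} and using $u_t=Au$,
\begin{equation*}
\frac{d}{dt}E(t)=\int_\Omega u(t)\,u_t(t)\,dx\,dy=\langle Au(t),u(t)\rangle_{\mathcal{H}}.
\end{equation*}
At this point I would simply invoke the identity already obtained in the proof of Proposition \ref{pr:MUdis}, namely
\begin{equation*}
\langle Au,u\rangle_{\mathcal{H}}=-\frac{1-\alpha^2}{2}\int_0^L u_x^2(0,y)\,dy-\beta\int_0^L u^2(x,L)\,dx-\frac12\int_0^L\bigl(\partial_x^{-1}u_y(0,y)\bigr)^2\,dy,
\end{equation*}
which in particular is $\le 0$.

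Next I would bound the three coefficients from below by $C:=\min\{(1-\alpha^2)/2,\beta\}$. Since $0<|\alpha|<1$ we have $0<(1-\alpha^2)/2<1/2$, hence $C\le(1-\alpha^2)/2$, $C\le\beta$ and $C<1/2$; as the three boundary integrals are nonnegative, replacing each coefficient by $C$ only enlarges the right-hand side, which yields \eqref{eq:MUendis} for every $u_0\in D(A)$. In particular $E$ is non-increasing along classical solutions.

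Finally, to reach an arbitrary $u_0\in\mathcal{H}$ I would use a density argument: choose $u_0^n\in D(A)$ with $u_0^n\to u_0$ in $\mathcal{H}$; since $\{S(t)\}$ is a contraction semigroup, $u^n\to u$ in $C([0,T];\mathcal{H})$ for each $T>0$, so $E^n\to E$ uniformly on $[0,T]$ and the monotonicity $E(t)\le E(s)$ for $t\ge s$ passes to the limit at once. For the sharper statement I would integrate \eqref{eq:MUendis} in time over $[s,t]$ and observe that the maps $u_0\mapsto u_x(0,\cdot,\cdot)$, $u_0\mapsto u(\cdot,L,\cdot)$ and $u_0\mapsto\partial_x^{-1}u_y(0,\cdot,\cdot)$ extend to bounded linear maps from $\mathcal{H}$ into $L^2(0,T;L^2(0,L))$ — a hidden-regularity bound that itself follows from integrating the identity above and using $\|u(t)\|_{\mathcal{H}}\le\|u_0\|_{\mathcal{H}}$ — so that both sides of the integrated inequality depend continuously on $u_0$ and the inequality survives the limit; hence \eqref{eq:MUendis} holds in the integrated (equivalently, distributional) sense for every mild solution. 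The only genuinely delicate point is this last passage to the limit, because for merely $L^2$ data the boundary traces are not defined a priori and must be read through the hidden-regularity estimate; the remainder is the integration-by-parts bookkeeping already carried out in Lemma \ref{le:MUAadj} and Proposition \ref{pr:MUdis}.
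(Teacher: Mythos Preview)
Your proof is correct and follows the same route as the paper, which simply records that \eqref{eq:MUendis} follows from the identity for $\langle Au,u\rangle_{\mathcal{H}}$ computed in Proposition~\ref{pr:MUdis}. In fact you are more careful than the paper: you explicitly check that $C\le 1/2$ so that the third boundary term is also covered, and you supply the density argument extending the inequality from classical to mild solutions, both of which the paper leaves implicit.
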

\begin{proof}
 From proof of Proposition \ref{pr:MUdis}, \eqref{eq:MUendis} holds.
\end{proof}

The following proposition provides useful estimates for the
mild {  solutions of} \eqref{eq:KP}. The first ones are standard energy
estimates, while the last one reveals a Kato smoothing effect.

\begin{prop}
Let $u_0 \in \mathcal{H}$, then the mild solution $u$ of system \eqref{eq:KP} satisfies 
\begin{equation}\label{kato}
\|u\|_{L^2(0,T,H_x^1(\Omega))} \leq C\|u_0\|^2_{\mathcal{H}} 
\end{equation}
for some positive constant $C(L,T,\alpha,\beta)>0$. 
\end{prop}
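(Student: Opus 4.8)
The plan is to establish the estimate by the classical multiplier method with the weight $q(x)=x$, proving it first for smooth solutions and then passing to general $u_0\in\mathcal{H}$ by density. The $L^2$-in-time bound on the zeroth-order part, namely $\int_0^T\|u(t)\|_{\mathcal{H}}^2\,dt\le T\|u_0\|_{\mathcal{H}}^2$, is immediate from Proposition~\ref{pr:Den} since $E$ is non-increasing, so the real work is the $H_x^1$ gain.

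First I would take $u_0\in D(A)$, so that the solution is classical and all the integrations by parts below are legitimate. Multiplying the first equation of~\eqref{eq:KP} by $xu$ and integrating over $\Omega\times(0,T)$, the time derivative produces $\tfrac12\int_\Omega xu^2(T)-\tfrac12\int_\Omega xu^2(0)$. The drift term $u_x$, after one integration by parts in $x$ using $u(0,y,t)=u(L,y,t)=0$, contributes $-\tfrac12\int_\Omega u^2$. The dispersive term $u_{xxx}$, after three integrations by parts in $x$ with the homogeneous Dirichlet conditions killing all boundary terms except one, contributes $\tfrac32\int_\Omega u_x^2-\tfrac{L}{2}\int_0^L u_x^2(L,y)\,dy$; the feedback law $u_x(L,y,t)=\alpha u_x(0,y,t)$ turns the boundary term into $-\tfrac{L\alpha^2}{2}\int_0^L u_x^2(0,y)\,dy$. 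For the nonlocal term I would set $w=\partial_x^{-1}u$, so that $w_x=u$, $w(L,y)=0$ and $\partial_x^{-1}u_{yy}=w_{yy}$; integrating by parts in $y$ and then in $x$, and using $u_y(x,0,t)=0$ (hence $w_y(x,0,t)=\partial_x^{-1}u_y(x,0,t)=0$) together with $u_y(x,L,t)=\beta u_x(x,L,t)$ (hence $w_y(x,L,t)=\beta u(x,L,t)$, in accordance with~\eqref{boundaryforf}), this term contributes $\tfrac12\int_\Omega(\partial_x^{-1}u_y)^2+\beta\int_0^L x\,u^2(x,L)\,dx$.

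Collecting these identities and solving for $\int_0^T\int_\Omega u_x^2$, every remaining term on the right-hand side is harmless: the value at $t=T$, the term $\tfrac12\int_0^T\int_\Omega(\partial_x^{-1}u_y)^2$, and the term $\beta\int_0^T\int_0^L x\,u^2(x,L)$ all appear with a sign that lets them be dropped; $\tfrac12\int_\Omega xu^2(0)\le\tfrac{L}{2}\|u_0\|_{\mathcal{H}}^2$; $\tfrac12\int_0^T\int_\Omega u^2=\int_0^T E(t)\,dt\le TE(0)=\tfrac{T}{2}\|u_0\|_{\mathcal{H}}^2$; and finally $\tfrac{L\alpha^2}{2}\int_0^T\int_0^L u_x^2(0,y)\,dy\,dt$ is controlled by the dissipation inequality~\eqref{eq:MUendis}, which gives $\int_0^T\int_0^L u_x^2(0,y)\,dy\,dt\le C^{-1}\big(E(0)-E(T)\big)\le (2C)^{-1}\|u_0\|_{\mathcal{H}}^2$. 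This yields $\int_0^T\int_\Omega u_x^2\le C(L,T,\alpha,\beta)\|u_0\|_{\mathcal{H}}^2$, and combining with the zeroth-order bound gives $\|u\|_{L^2(0,T;H_x^1(\Omega))}^2\le C(L,T,\alpha,\beta)\|u_0\|_{\mathcal{H}}^2$. To remove the smoothness assumption I would approximate $u_0\in\mathcal{H}$ by $u_0^n\in D(A)$, note that the estimate holds uniformly for the corresponding solutions $u^n$, and pass to the limit using $u^n\to u$ in $C([0,T];\mathcal{H})$ together with weak lower semicontinuity of the $L^2(0,T;H_x^1)$ norm on the bounded sequence $(u^n)$. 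The one point requiring care is the sign bookkeeping in the triple integration by parts of $u_{xxx}$: one must verify that the only boundary term that is not manifestly non-positive is precisely the one at $x=0$ absorbed by~\eqref{eq:MUendis}; everything else is routine.
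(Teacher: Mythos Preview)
Your proposal is correct and follows essentially the same Morawetz multiplier approach as the paper: multiply by $xu$, integrate over $\Omega\times(0,T)$, compute each term via integration by parts using the boundary conditions and~\eqref{boundaryforf}, and then control the remaining boundary trace $\int_0^T\int_0^L u_x^2(0,y,t)\,dy\,dt$ through the dissipation estimate~\eqref{eq:MUendis}. Your bookkeeping is in fact slightly cleaner than the paper's (you correctly drop the nonpositive term $-\beta\int_0^T\int_0^L xu^2(x,L,t)\,dx\,dt$, which the paper bounds above unnecessarily), and you make explicit the density step from $u_0\in D(A)$ to $u_0\in\mathcal{H}$ that the paper leaves implicit.
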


	\begin{proof}
	To obtain the regularizing effect we employ the Morawetz multipliers technique. Indeed, by multiplying the equation \eqref{eq:KP} by $xu$ and integrating in $\Omega \times (0,T)$, we obtain that
	$$
	\int_0^T	\int_{\Omega}  xu(t) \left( u_t(t) +u_x(t)+ u_{xxx}(t)+ {\partial_x^{-1}} u_{yy}(t)  \right)  \,dx\,dy\,dt=0.
$$
	 Now, estimating term by term, it follows that the first three terms are given by
	 \begin{equation*}
	 \begin{cases}\displaystyle
	 \int_0^T	\int_{\Omega}  xu(t) u_t(t)\,dx\,dy\,dt  = \frac12 	\int_{\Omega}  x u(T)^2\,dx\,dy - \frac12 	\int_{\Omega}  x u_0^2\,dx\,dy, \\\displaystyle
	 
	 \int_0^T	\int_{\Omega}  xu(t) u_x(t)\,dx\,dy\,dt  ={  -\frac12 \int_0^T	\int_{\Omega}    u(t)^2 \,dx\,dy\,dt }\\\displaystyle
	 
	 \int_0^T	\int_{\Omega}  xu(t)  u_{xxx}(t) \,dx\,dy\,dt  
	 =\frac32 \int_0^T	\int_{\Omega}    u_{x}(t)^2 \,dx\,dy\,dt  -\frac{L}{2} \int_0^T	\int_0^L   u_{x}(L,y,t)^2\,dy\,dt  
	 \end{cases}
	 \end{equation*}	 
	 Hovewer, for the last term, we call $f_x(x,y)=u_y(x,y)$ with $f(L,y)=0$. Thus,  it follows that
\begin{multline*}
\int_0^T \int_{\Omega} x u(t)
			{\partial_x^{-1}}   u_{yy}(t)
		\,dx\,dy\,dt =\int_0^T\int_{\Omega}
		xu(t)	f_{y}(t)
		dydxdt \\
		 =\int_0^T\int_0^L[xu(t)f(t)]_{y=0}^{y=L}
		\,dx\,dt + \frac{1}{2}\int_0^T\int_{\Omega}
			f^2(t)
		\,dx\,dy\,dt.  
\end{multline*}
From \eqref{boundaryforf}, it follows that 
\begin{multline*}
\int_0^T\int_{\Omega} x u(t)
			{\partial_x^{-1}}   u_{yy}(t)
		\,dx\,dy\,dt \\
		 =\beta \int_0^T\int_0^L xu^2(x,L,t)	\,dx\,dt  +\frac{1}{2}\int_0^T\int_{\Omega}
			({\partial_x^{-1}}   u_{y}(t))^2
		\,dx\,dy\,dt.  
\end{multline*}
Now, adding the above estimates, we deduce that 
\begin{equation*}
\begin{aligned}
\frac{3}{2} \int_0^T\int_{\Omega} u_x^2(t)\,dx\,dy\,dt 
& + \frac{1}{2} \int_0^T\int_{\Omega} \left(\partial_x^{-1}u_y(t)\right)^2\,dx\,dy\,dt \\
& = \frac{1}{2}\int_{\Omega} x\left[u_0^2 -u^2(T)\right]\,dx\,dy +\frac{1}{2} \int_0^T\int_{\Omega} u^2(t)\,dx\,dy\,dt \\
 & \quad   + \frac{L}{2}\int_0^T\int_0^L u_x^2(L,y,t)\,dy\,dt - \beta \int_0^T\int_0^L xu^2(x,L,t)\,dx\,dt.
\end{aligned}
\end{equation*}
By using boundary conditions and Proposition~\ref{pr:Den},  we deduce that 
\begin{equation*}
\begin{aligned}
 \frac{3}{2} \int_0^T \int_{\Omega} u_{x}^2 \,dx\,dy\,dt	 &
    \leq(L+T)\|u_0\|_{\mathcal{H}}^2 + \frac{\alpha^2L}{2}\int_0^T\int_0^L u_x^2(0,y,t)\,dy\,dt \\
    & +\beta L \int_0^T\int_0^L u^2(x,L,t)\,dx\,dt \leq \left(L+T+ \frac{\max\lbrace \alpha^2, 2\beta\rbrace}{2C(\alpha,\beta)}L\right)\|u_0\|_{\mathcal{H}}^2.
\end{aligned}
\end{equation*}
Therefore, it  yields that
\begin{equation*}
\begin{aligned}
\int_0^T \| u( t)\|_{H_x^1(\Omega)}^2 dt 
	& =  \int_0^T \| u(t)\|_{L^2(\Omega)}^2 dt  + \int_0^T \| u_x( t)\|_{L^2(\Omega)}^2 dt  \\
	& \leq \left[T + \frac{2}{3} \left(L+T+ \frac{\max\lbrace \alpha^2, 2\beta\rbrace L }{2C(\alpha,\beta)}\right)\right]\|u_0\|_{\mathcal{H}}^2. 
\end{aligned}  
\end{equation*} 
Consequently~\eqref{kato} holds for $C = C(L,T, \alpha,\beta) > 0$. 
%
\end{proof}

The forthcoming proposition furnishes valuable estimates for the mild solutions of \eqref{eq:KP_1}, exemplifying a standard { energy estimate.}
\begin{prop}
Let $u_0 \in \mathcal{H}$, then the mild solution $u$ of system \eqref{eq:KP} satisfies 
\begin{equation}\label{trazos}
\begin{aligned}
T \|u_0\|_{\mathcal{H}}^2 & =\int_0^T	 \| u(t)\|_{\mathcal{H}}^2dt +2 \beta \int_0^T \int_0^L  (T-t)u^2(x,L,t)	\,dx\,dt  \\
 &\quad +(1-\alpha^2)\int_0^T	\int_0^L   (T-t)   u_{x}^2(0,y,t) \,dy\,dt \\ 
&\quad +\int_0^T\int_0^L (T-t)
			({\partial_x^{-1}}   u_{y}(0,y,t))^2
		\,dy \,dt  
\end{aligned}
\end{equation}
\end{prop}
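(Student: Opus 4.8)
The plan is to obtain \eqref{trazos} from the energy dissipation identity by inserting the time weight $(T-t)$ and integrating by parts in $t$. First I would record that, for $u_0\in D(A)$, the classical solution of \eqref{eq:KP} satisfies the pointwise identity
\[
\frac{d}{dt}\|u(t)\|_{\mathcal{H}}^2
= -(1-\alpha^2)\int_0^L u_x^2(0,y,t)\,dy
-2\beta\int_0^L u^2(x,L,t)\,dx
-\int_0^L\big(\partial_x^{-1}u_y(0,y,t)\big)^2\,dy ,
\]
which is nothing but $2\frac{d}{dt}E(t)$ and follows from the computation in the proof of Proposition~\ref{pr:MUdis} (equivalently from Proposition~\ref{pr:Den} and \eqref{eq:MUendis}). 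Denote the right-hand side by $-D(t)$.

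Next I would multiply this identity by $(T-t)$ and integrate on $(0,T)$. Integrating the left-hand side by parts in time,
\[
\int_0^T (T-t)\frac{d}{dt}\|u(t)\|_{\mathcal{H}}^2\,dt
= \Big[(T-t)\|u(t)\|_{\mathcal{H}}^2\Big]_0^T + \int_0^T \|u(t)\|_{\mathcal{H}}^2\,dt
= -T\|u_0\|_{\mathcal{H}}^2 + \int_0^T \|u(t)\|_{\mathcal{H}}^2\,dt ,
\]
since the boundary term vanishes at $t=T$. Equating this with $-\int_0^T (T-t)D(t)\,dt$ and rearranging yields
\[
T\|u_0\|_{\mathcal{H}}^2 = \int_0^T \|u(t)\|_{\mathcal{H}}^2\,dt + \int_0^T (T-t)D(t)\,dt ,
\]
which, upon substituting the three constituents of $D(t)$, is exactly \eqref{trazos}. (Equivalently one may multiply the first equation of \eqref{eq:KP} by $(T-t)u$ and integrate over $\Omega\times(0,T)$, handling $\int_\Omega (T-t)uu_t$ by integration by parts in $t$ and $\int_\Omega (T-t)u\,(u_x+u_{xxx}+\partial_x^{-1}u_{yy})$ by the spatial integrations by parts already performed in the proof of Proposition~\ref{pr:MUdis}.)

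Finally I would pass from $u_0\in D(A)$ to general $u_0\in\mathcal{H}$ by density. The delicate point is that the boundary traces $u_x(0,\cdot,\cdot)$, $u(\cdot,L,\cdot)$ and $\partial_x^{-1}u_y(0,\cdot,\cdot)$ occurring in \eqref{trazos} are not defined a priori for merely $L^2$ data; however, integrating \eqref{eq:MUendis} in time shows that $\int_0^T D(t)\,dt \le \|u_0\|_{\mathcal{H}}^2$ on $D(A)$, so the trace map $u_0\mapsto\big(u_x(0,\cdot,\cdot),u(\cdot,L,\cdot),\partial_x^{-1}u_y(0,\cdot,\cdot)\big)$ extends continuously to $\mathcal{H}\to\big(L^2(0,T;L^2(0,L))\big)^3$. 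Combined with $u\in C([0,T];\mathcal{H})$ (hence continuity of $u_0\mapsto\int_0^T\|u(t)\|_{\mathcal{H}}^2\,dt$; see also \eqref{kato}), both sides of \eqref{trazos} depend continuously on $u_0\in\mathcal{H}$, so the identity persists in the limit. I expect this last step — justifying the boundary-trace regularity for mild solutions and the passage to the limit — to be the only real obstacle; the interior manipulations are elementary integrations by parts in $t$.
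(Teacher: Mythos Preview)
Your proposal is correct and follows essentially the same approach as the paper: the paper multiplies the equation by the Morawetz-type weight $(T-t)u$ and integrates over $\Omega\times(0,T)$, computing each term separately, which is exactly the ``equivalent'' route you mention in parentheses; your primary argument simply factors this as first obtaining the energy identity (multiplier $u$) and then applying the time weight $(T-t)$, which amounts to the same computation in a different order. Your added density step extending the identity from $u_0\in D(A)$ to $u_0\in\mathcal{H}$ via the hidden-trace bound $\int_0^T D(t)\,dt\le\|u_0\|_{\mathcal{H}}^2$ is a welcome rigor that the paper leaves implicit.
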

\begin{proof}
Multiplying \eqref{eq:KP} by $(T-t)u$ and integrating in $\Omega \times (0,T)$, we obtain that
	\begin{align*}
	\int_0^T	\int_{\Omega}  (T-t)u(t) \left( u_t(t) + u_{x}(t)+u_{xxx}(t)+ {\partial_x^{-1}} u_{yy}(t)  \right)  \,dx\,dy\,dt=0.
	\end{align*}
	 Now, estimating term by term, it follows that 
	 \begin{align*}
	 \int_0^T	\int_{\Omega}  (T-t)u(t) u_t(t)\,dx\,dy\,dt  	&=-\frac{T}{2} 	\int_{\Omega}  u_0^2\,dx\,dy  	 +\frac12 \int_0^T	\int_{\Omega}  u^2(t)\,dx\,dy\,dt,  \\
	 \int_0^T	\int_{\Omega}  (T-t)u(t)  u_{x}(t) \,dx\,dy\,dt  
	 &= 0
	 \end{align*}
and the another terms are given by 
	  \begin{align*}
	 \int_0^T	\int_{\Omega}  (T-t)u(t)  u_{xxx}(t) \,dx\,dy\,dt  
	 =\frac{(1-\alpha^2)}{2} \int_0^T	\int_0^L   (T-t)   u_{x}^2(0,y,t)\,dy\,dt 
	 \end{align*}
	 On the other hand, note that calling $f_x(x,y)=u_y(x,y)$ with $f(L,y)=0$ and from \eqref{boundaryforf}, it follows that
\begin{multline*}
\int_0^T\int_{\Omega} (T-t) u(t)
			{\partial_x^{-1}}   u_{yy}(t)
		\,dx\,dy dt  
		= \beta \int_0^T \int_0^L  (T-t)u^2(x,L,t)	dxdt  
		\\
		+\frac{1}{2}\int_0^T\int_0^L (T-t)
			({\partial_x^{-1}}   u_{y}(0,y,t))^2
		dy dt.  
\end{multline*}
Now, adding the above estimates, we deduce \eqref{trazos}.
	\end{proof}

\section{Exponential Stabilization - Compactness Uniqueness approach}\label{s:3}

In this section, we study the long-term behavior of solutions to \eqref{eq:KP}. Our goal is to demonstrate that the energy associated with the linear KP equation decays exponentially.

\begin{proof}[\bf Proof of Theorem \ref{11th:ESLyap_2}]
By leveraging energy dissipation, specifically the estimate \eqref{eq:MUendis}, in conjunction with a classical argument, our task simplifies to proving the \emph{observability inequality}, that is, there exists a positive constant $C$ such that
\begin{equation}\label{observability}
\begin{aligned}
\|u_0\|_{\mathcal{H}}^2 &\leq C
	\left[%
		\int_0^T\int_0^L u_{x}^2(0,y,t)\,dy\,dt
		+ \int_0^T\int_0^L u^2(x,L,t)\,dx\,dt\right. \\
		&\hspace*{4cm}\left. + \int_0^T\int_0^L({\partial_x^{-1}} u_y(0,y,t))^2\,dy\,dt 
	\right]
\end{aligned}
\end{equation}
where $u$ denotes the solution of \eqref{eq:KP}. { Indeed}, suppose that \eqref{observability} is verified and, as the energy is non-increasing, we have, thanks to \eqref{eq:MUendis}, that
$$
\begin{aligned}
E(T)- E(0) & \leq  -C
	\left[%
		\int_0^T\int_0^L u_{x}^2(0,y,t)\,dy\,dt+\int_0^T\int_0^L u^2(x,L) \,dx\,dt \right. \\
		& \qquad \qquad\qquad  \left.  +\int_0^T\int_0^L({\partial_x^{-1}} u_y(0,y,t))^2\,dy\,dt
	\right]  \leq - CE(0) \\
&\leq -C E(T) 
\end{aligned}
$$

which implies that

$$
E(T) \leq \gamma E(0), \text { with } \gamma=\frac{C}{1+C}<1
$$

The same argument used on the interval $[(m-1) T, m T]$ for $m=1,2, \ldots$, yields that

$$
E(m T) \leq \gamma E((m-1) T) \leq \cdots \leq \gamma^m E(0)
$$

Thus, we have

$$
E(m T) \leq e^{-\theta m T} E(0)\quad\text{{ with}}\quad
\theta=\frac{1}{T} \ln \left(1+\frac{1}{C}\right)>0 .
$$

For an arbitrary positive $t$, there exists $m \in \mathbb{N}^*$ such that $(m-1) T<t \leq m T$, and by the non-increasing property of the energy, we conclude that

$$
E(t) \leq E((m-1) T) \leq e^{-\theta(m-1) T} E(0) \leq \frac{1}{\gamma} e^{-\theta t} E(0) .
$$

By the density of $D(A)$ in $\mathcal{H}$, we deduce that the exponential decay of the energy $E$ holds for any initial data in $\mathcal{H}$, showing so Theorem \eqref{11th:ESLyap_2}.

\end{proof}

Let us now prove the inequality \eqref{observability}.

\begin{proof}[\bf Proof of Observability Inequality \eqref{observability}]
To establish the observability inequality \eqref{observability}, we approach it in three distinct steps:
\vglue 0.4cm
\subsection*{STEP 1. (Compactness-Uniqueness Argument)}
We argue by contradiction, applying the compactness uniqueness
argument due to E. Zuazua (see \cite{Lions1}). If \eqref{observability} is false, then we may find a sequence $\{u^n_0\}_{n\in \N}$
in $\mathcal{H}$ such that $\|u_0^n\|_{\mathcal{H}}=1$ and
\begin{equation}\label{observabilitycontra}
\begin{aligned}
\|u_0^n\|^2_{\mathcal{H}} \geq n
	\left[%
		\int_0^T\int_0^L (u^n_{x})^2(0,y,t)\,dy\,dt
		 +\int_0^T \int_0^L (u^n)^2(x,L,t) \,dx\,dt 
		\right. \\
		\left.+ \int_0^T\int_0^L({\partial_x^{-1}} u^n_y(0,y,t))^2 \,dy\,dt
	\right].
\end{aligned}
\end{equation}
On the other hand, it follows from \eqref{kato} that $\{u^n\}_{n\in\N} = \{{S}(\cdot)u_0^n\}_{n\in\N}$ is bounded in $L^2(0,T;\mathcal{H})$. By \eqref{eq:KP}, $\{u^n_t\}_{n\in\N}$ is bounded in $L^2(0,T;H^{-2}(\Omega))$, Indeed, by \eqref{eq:KP}, $$u_t^n = -u_x^n- u_{xxx}^n -  \partial_x^{-1} u_{yy}^n.$$  As $\left\lbrace u^n \right\rbrace_n$ is bounded in $L^2(0,T; H_x^1(\Omega))$ implies that $\left\lbrace u_{xxx}^n \right\rbrace_{n}$ is bounded in $L^2(0,T; H^{-2}(\Omega))$. 
Define $f$ such that 
$u_y^n = f_x^n$ which implies $\partial_x^{-1} u_{yy}^n = f_y^n.$
Therefore, by using the anisotropic Gagliardo-Niremberg inequality  (Theorem \ref{thm:Besov}), it yields that  $$\lVert \partial_x^{-1} u_{yy}^n \rVert_{\mathcal{H}} = \lVert f_y^n \rVert_{\mathcal{H}} \leq C \lVert f_x^n\rVert_{\mathcal{H}} = C \lVert u_y^n \rVert_{\mathcal{H}} \leq C^2\|u_x^n\|_{\mathcal{H}} <\infty.$$ 
Recognizing $\mathcal{H}$ as the pivot space such that $H_0^1(\Omega) \subset \mathcal{H} \subset H^{-1}(\Omega)$, we select $\xi$ with sufficient regularity to ensure that
\begin{align*}
\left\lvert\left\langle \partial_x^{-1} u_{yy}^n, \xi \right\rangle_{H^{-1}, H_0^1}\right\rvert  
&= \left\lvert\left\langle f_y^n, \xi \right\rangle_{H^{-1}, H_0^1}\right\rvert  
 = \left\lvert\left\langle \xi, f_y^n \right\rangle_{\mathcal{H},\mathcal{H}}\right\rvert \\
 &  \leq \lVert \xi \rVert_{\mathcal{H}}\cdot \lVert f_y^n\rVert_{\mathcal{H}} \leq C \lVert \xi \rVert_{\mathcal{H}}\cdot \lVert f_x^n\rVert_{\mathcal{H}} 
 = C \lVert \xi \rVert_{\mathcal{H}}\cdot \lVert u_y^n\rVert_{\mathcal{H}}  .
\end{align*}
Then,
\begin{align*}
\lVert \partial_x^{-1} u_{yy}^n \rVert_{L^2(0,T; H^{-1})}^2 
 &\leq \int_0^T \lVert \partial_x^{-1} u_{yy}^n (\cdot,\cdot,t) \rVert_{H^{-1}(\Omega)}^2\,dt
  \leq C^2 \int_0^T \lVert u_y^n(\cdot,\cdot,t) \rVert_{L^2(\Omega)}^2 \\ 
 & \leq C^2 \lVert u^n \rVert_{L^2(0,T; H_x^1(\Omega))}.
\end{align*}
Hence, applying Aubin's lemma, we see that a subsequence of $\{u^n\}_{n\in\N}$, again denoted by $\{u^n\}_{n\in\N}$, converges strongly in $L^2(0,T;\mathcal{H})$ towards some $u$. Moreover, using \eqref{trazos} and \eqref{observabilitycontra}, we see that $\{u_0^n\}_{n\in\N}$ is a Cauchy sequence in $\mathcal{H}$, hence { converges} for some $u_0 \in \mathcal{H}$. It means that 
$u_0^n \rightarrow u_0$ in $\mathcal{H}$.
Clearly, $u={S}(\cdot)u_0$, and we infer from \eqref{observabilitycontra} that
\begin{equation}\label{compactuniquesscondition}
u_{x}(0,y,t)=u(x,L,t)=\partial_x^{-1} u_y(0,y,t)=0 \quad  \text{for all $x,y \in [0,L]$}
\end{equation}
and  that $\|u_0\|_{\mathcal{H}}=1$.

\subsection*{STEP 2. (Reduction to a Spectral Problem)}

{ We reduce the proof of the observability inequality into a spectral problem as done in~\cite{Rosier}  for the Korteweg-de
Vries equation}
\begin{lemma}\label{le:RedSpecP}
For any $T > 0$, let $N_T$ denote the space of all the initial states,  $u_0 \in \mathcal{H}$ for which the solution $u(t)=S(t)u_0$ of \eqref{eq:KP} satisfies \eqref{compactuniquesscondition}. Then $N_T =\{0\}$. 

\begin{proof}
The proof uses the same arguments as those given in \cite[Lemma 3.4]{Rosier}, Therefore, if $N_T \neq \emptyset$, the map $u_0 \in N_T \mapsto {A}(N_T) \subset \mathbb{C}N_T$ (where $\mathbb{C}N_T$ denote the complexification of $N_T$) has (at least) one eigenvalue; hence, there exist $\lambda \in \mathbb{C}$ and  $u_0 \in H^3_x(\Omega)\cap H^2_y(\Omega)$ such that
\begin{equation}\label{spectralsystem}
\begin{cases}
		\lambda u_0(x,y) + u_{x,0}(x,y)+ u_{xxx,0}(x,y) + \partial_x^{-1} u_{yy,0}(x,y)	=0,\\
	u_0(0,y)=u_0(L,y)=0,\quad  u_{y,0}(x,L)=\beta u_{x,0}(x,L),   \\
	u_{x,0}(L,y)=\alpha u_{x,0}(0,y)=0, \quad 	u_{y,0}(x,0)=0 \\
	u_{x,0}(0,y)=u_0(x,L)=\partial_x^{-1} u_{y,0}(0,y)=0
\end{cases}
\end{equation}
with $(x,y) \in \Omega$. To conclude the proof of the Lemma, we prove that this does not hold.
\end{proof}
\end{lemma}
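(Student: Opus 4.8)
The plan is to establish, for every $L>0$ and every $\lambda\in\mathbb{C}$, that the overdetermined system \eqref{spectralsystem} admits only $u_0\equiv0$; by Lemma~\ref{le:RedSpecP} this yields $N_T=\{0\}$, which is the missing ingredient of Steps~1--2 and hence of \eqref{observability}. First I would note that, $u_0$ being an eigenfunction of $A$ and $A$ having compact resolvent, a bootstrap in \eqref{spectralsystem} places $u_0$ in $\bigcap_{m\ge1}D(A^{m})$, so $u_0$ is as regular as needed below; write $v:=u_0$.

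The next step is to eliminate the nonlocal term: differentiating the first line of \eqref{spectralsystem} in $x$ and using $\partial_x\partial_x^{-1}=\mathrm{Id}$ turns it into the constant-coefficient equation
\begin{equation}\label{eq:plan-ell}
\partial_y^2 v+\partial_x^4 v+\partial_x^2 v+\lambda\,\partial_x v=0\qquad\text{in }\Omega .
\end{equation}
The key observation I would exploit is that the feedback law together with the observation data furnish \emph{exactly} a clamped configuration in $x$: the conditions $v(0,y)=v(L,y)=0$ together with $v_x(0,y)=0$ and $v_x(L,y)=\alpha v_x(0,y)=0$ mean that, for every $y$, the slice $v(\cdot,y)$ lies in $D(M):=\{\varphi\in H^{4}(0,L):\varphi(0)=\varphi(L)=\varphi'(0)=\varphi'(L)=0\}$, where $M\varphi:=-\varphi^{(4)}-\varphi''-\lambda\varphi'$; moreover $v(x,L)=0$ forces $v_x(x,L)=0$ and then the boundary law $v_y(x,L)=\beta v_x(x,L)$ gives $v_y(x,L)=0$. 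Thus \eqref{eq:plan-ell} can be read as the second-order abstract Cauchy problem in $y$,
\[
w''(y)=Mw(y),\qquad w(L)=0,\quad w'(L)=0,\qquad w(y):=v(\cdot,y)\in D(M)\subset L^{2}(0,L).
\]
Note that this argument uses neither $u_{y,0}(x,0)=0$ nor the third observation $\partial_x^{-1}u_{y,0}(0,y)=0$: the over-determination produced by the $\alpha$-coupling combined with the observation of $u_x(0,\cdot)$ already suffices, which is precisely why no arithmetic restriction on $L$ appears here, in contrast with the purely one-dimensional overdetermined ODE that generates the critical sets in \cite{Rosier,capistrano2024}.

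Then I would conclude as follows. The operator $M$ is a lower-order perturbation of the self-adjoint positive clamped bi-Laplacian $-\partial_x^{4}$, and the clamped boundary conditions are strongly (Birkhoff-)regular; by the classical spectral theory of two-point ordinary differential operators its adjoint $M^{*}$ has a complete system of root functions in $L^{2}(0,L)$. Pairing \eqref{eq:plan-ell} with a member of a Jordan chain $(M^{*}-\overline{\nu_k})\phi_k^{(j)}=\phi_k^{(j-1)}$ of $M^{*}$, and using $w(y)\in D(M)$, $\phi_k^{(j)}\in D(M^{*})$, the scalars $h_j(y):=\langle w(y),\phi_k^{(j)}\rangle_{L^{2}(0,L)}$ solve the triangular system $h_j''=\nu_k h_j+h_{j-1}$ with $h_j(L)=h_j'(L)=0$; solving it from $j=1$ upward gives $h_j\equiv0$ for all $j,k$, and completeness of $\{\phi_k^{(j)}\}$ then forces $w\equiv0$, i.e.\ $u_0\equiv0$, contradicting $\|u_0\|_{\mathcal{H}}=1$.

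The step I expect to be the real obstacle is precisely this passage from the overdetermined Cauchy problem to $w\equiv0$: equation \eqref{eq:plan-ell} is \emph{not} amenable to a direct Holmgren-type unique continuation from $\{y=L\}$, because its principal part is $\partial_x^{4}$, so every hyperplane $\{y=\mathrm{const}\}$ is characteristic; this is why I would route the argument through the spectral theory of $M$ (completeness of the root system) rather than through a PDE unique-continuation theorem. An alternative, closer to \cite[Lemma~3.4]{Rosier} and perhaps smoother to reference, is to extend $u_0$ and $\psi:=\partial_x^{-1}u_{y,0}$ by zero in $x$ (legitimate since $u_0,u_{x,0},\psi,\psi_x$ all vanish at $x=0,L$), Fourier-transform in $x$, observe that $\widehat{u_0}(\xi,\cdot)$ solves a second-order ODE in $y$ whose source involves only the traces $u_{xx,0}(0,\cdot)$ and $u_{xx,0}(L,\cdot)$, and use that by Paley--Wiener $\widehat{u_0}(\cdot,y)$ must be entire of exponential type $\le L$; matching the two descriptions forces the traces to vanish and hence $u_0\equiv0$. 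Either route gives $N_T=\{0\}$ for all $L>0$, completing the proof.
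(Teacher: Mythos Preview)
Your plan is correct, but it is considerably heavier than the paper's own argument. In the paper, Step~3 (Lemma~\ref{step3}) is disposed of by a single energy identity: one multiplies the first line of \eqref{spectralsystem} by $\bar\lambda\,\bar u_0$, integrates over $\Omega$, and uses \emph{all} the boundary and observation data---including the third observation $\partial_x^{-1}u_{y,0}(0,y)=0$, which you deliberately discard---to show that each of $\int u u_x$, $\int u u_{xxx}$ and $\int u\,\partial_x^{-1}u_{yy}$ vanishes separately. This yields $|\lambda|^2\|u_0\|_{\mathcal H}^2=0$ in two lines, with no need for the fourth-order reformulation, the abstract Cauchy problem in $y$, or the Birkhoff completeness theorem for~$M^*$.

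What each route buys: the paper's computation requires no regularity beyond $u_0\in D(A)$ and no external spectral input, but as written it is silent when $\lambda=0$ (that case is treated separately only in the Remark for the driftless model, via a $y$-dependent energy $\tilde E$). Your argument, by contrast, is uniform in $\lambda$ and more robust---it would survive, say, a variable lower-order coefficient---at the price of invoking the bootstrap $u_0\in\bigcap_m D(A^m)$ and the completeness of the root system of the clamped operator, both of which you sketch rather than prove. Your structural observation that the coupling $u_x(L,y)=\alpha u_x(0,y)$ together with the single observation $u_x(0,y)=0$ already forces the full clamped overdetermination in $x$ (so that neither $u_y(x,0)=0$ nor $\partial_x^{-1}u_y(0,y)=0$ is needed to kill the eigenfunction) is correct and is a genuine strengthening; it gives a clean conceptual reason why no critical-length set arises with this feedback, beyond the computational verification the paper provides.
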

In order to obtain the contradiction, it remains to prove that a duple $(\lambda, u_0)$ as above does not exist.

\subsection*{Step 3. (No Nontrivial Solution for the Spectral Problem)}
\begin{lemma}\label{step3}
Let $\lambda \in \C$ and $u_0 \in H^3_x(\Omega) \cap H^2_y(\Omega)$ fulfilling \eqref{spectralsystem}. Then $u_0=0$.
\end{lemma}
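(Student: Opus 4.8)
The plan is to convert the two-dimensional spectral problem \eqref{spectralsystem} into a family of overdetermined one-dimensional boundary-value problems by a Fourier expansion in $y$, to annihilate each of them through a Paley--Wiener argument, and to close the last step by an elementary congruence computation; this congruence step is the only place where I expect genuine work.

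First I would observe that $u_{y,0}(x,0)=0$ by hypothesis, whereas $u_0(x,L)=0$ forces $\partial_x u_0(x,L)=u_{x,0}(x,L)=0$ and hence $u_{y,0}(x,L)=\beta u_{x,0}(x,L)=0$; thus $u_0(x,\cdot)$ satisfies a homogeneous Neumann condition at $y=0$ and a homogeneous Dirichlet condition at $y=L$. I therefore expand $u_0(x,y)=\sum_{k\ge0}a_k(x)\phi_k(y)$ in the complete orthogonal system $\phi_k(y)=\cos\!\big(\tfrac{(2k+1)\pi}{2L}\,y\big)$ of eigenfunctions of $-\partial_y^2$ for those boundary conditions, with eigenvalues $\mu_k=\big(\tfrac{(2k+1)\pi}{2L}\big)^2$; here $a_k\in H^3(0,L)$ because $u_0\in H^3_x(\Omega)$, and --- crucially --- $\mu_k>0$ for \emph{every} $k\ge0$, so no mode degenerates to a third-order equation. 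This is exactly the place where the transverse term $\partial_x^{-1}u_{yy}$ removes the critical-length obstruction that occurs for KdV in \cite{Rosier}.

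Next, set $b_k:=\partial_x^{-1}a_k$, so that $b_k'=a_k$, $b_k(L)=0$ and $b_k\in H^4(0,L)\hookrightarrow C^3([0,L])$. Since $\partial_x^{-1}$ commutes with $\partial_y$, the first identity in \eqref{spectralsystem} projects onto the scalar ODE $b_k''''+b_k''+\lambda b_k'-\mu_k b_k=0$ on $(0,L)$ for every $k$. Reading the remaining conditions in \eqref{spectralsystem} mode by mode, $u_0(0,y)=u_0(L,y)=0$ give $b_k'(0)=b_k'(L)=0$; $u_{x,0}(0,y)=u_{x,0}(L,y)=0$ give $b_k''(0)=b_k''(L)=0$; and $\partial_x^{-1}u_{y,0}(0,y)=\partial_y\!\big(\partial_x^{-1}u_0\big)(0,y)=\sum_k b_k(0)\phi_k'(y)=0$ gives, by linear independence of the $\phi_k'$, that $b_k(0)=0$, which with $b_k(L)=0$ leaves the \emph{six} homogeneous conditions $b_k(0)=b_k'(0)=b_k''(0)=0=b_k(L)=b_k'(L)=b_k''(L)$ on a fourth-order ODE. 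I then show each $b_k\equiv0$. Extending $b_k$ by zero to $\R$ (keeping the name $b_k$), the six conditions make it a compactly supported $H^3(\R)$ function, so $\widehat{b_k}$ is entire of exponential type $\le L$, and the distributional identity $b_k''''+b_k''+\lambda b_k'-\mu_k b_k=b_k'''(0)\delta_0-b_k'''(L)\delta_L$ (valid on $\R$ thanks to the six vanishing conditions, with $b_k'''(0),b_k'''(L)$ the one-sided limits) transforms into $\widehat{b_k}(\xi)\,p_k(\xi)=b_k'''(0)-b_k'''(L)e^{-iL\xi}$ with $p_k(\xi)=\xi^4-\xi^2+i\lambda\xi-\mu_k$. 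Entirety forces the right side to vanish at every zero of $p_k$: a multiple zero immediately yields $b_k'''(L)=0$, hence $b_k'''(0)=0$ and $\widehat{b_k}\equiv0$; if instead $p_k$ has four distinct zeros $\xi_1,\dots,\xi_4$ and $b_k\not\equiv0$, then $b_k'''(0),b_k'''(L)\ne0$ and $e^{-iL\xi_j}=b_k'''(0)/b_k'''(L)$ is one fixed nonzero number for all $j$. Consequently all $\xi_j$ have the same imaginary part, and since $\xi_1+\cdots+\xi_4=0$ (no cubic term) they are all real and $|e^{-iL\xi_j}|=1$; in particular $\Re\lambda=0$, a fact one can also obtain at once from the dissipation identity of Proposition~\ref{pr:MUdis} applied to the eigenfunction together with \eqref{compactuniquesscondition}.

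Writing $\lambda=i\omega$ with $\omega\in\R$ and $\xi_j=\tfrac{\pi}{2L}N_j$, one then checks that the $N_j\in\Z$ all lie in one residue class $c$ modulo $4$ with $\sum_j N_j=0$, while Vieta's formulas for $p_k(\xi)=\xi^4-\xi^2-\omega\xi-\mu_k$ give $\sum_{i<j}\xi_i\xi_j=-1$ and $\prod_j\xi_j=-\mu_k$, that is $\sum_j N_j^2=8L^2/\pi^2$ and
\[
2\prod_{j}N_j=-(2k+1)^2\sum_{j}N_j^2 .
\]
The crux is that this integer identity has no solution with $N_j\in\Z\setminus\{0\}$, all $N_j\equiv c\ (\mathrm{mod}\ 4)$ and $\sum_j N_j=0$, and I would split on $c$. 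If $c$ is odd then $\sum_j N_j^2\equiv4\ (\mathrm{mod}\ 8)$, so the right side is $\equiv0\ (\mathrm{mod}\ 4)$ while the left side is $\equiv2\ (\mathrm{mod}\ 4)$ since $\prod_j N_j$ is odd. If $c\equiv2$, writing $N_j=2P_j$ with $P_j$ odd gives $8\prod_j P_j=-(2k+1)^2\sum_j P_j^2$, impossible modulo $16$ (left side $\equiv8$, right side $\equiv4$ or $12$). If $c\equiv0$, write $N_j=4M_j$ and divide the $M_j$ by their gcd to a primitive quadruple $M_j'$ with $\sum_j M_j'=0$; the identity forces $32\mid\sum_j(M_j')^2$, impossible because a primitive integer quadruple summing to zero has an even number $r\in\{2,4\}$ of odd entries, so $\sum_j(M_j')^2\equiv2$ or $6\ (\mathrm{mod}\ 8)$ when $r=2$ and $\equiv4\ (\mathrm{mod}\ 8)$ when $r=4$, never $0\ (\mathrm{mod}\ 8)$. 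In every case the identity fails, so $b_k\equiv0$ for all $k$, whence $a_k\equiv0$ for all $k$ and $u_0=\sum_k a_k\phi_k\equiv0$. Unlike in \cite{Rosier}, no auxiliary third-order spectral problem appears, which is precisely why the conclusion holds for every $L>0$; as indicated, the main obstacle is the displayed Diophantine identity, and its resolution relies on the sharper divisibility that comes from the genuinely fourth-order structure of each mode.
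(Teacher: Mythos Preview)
Your argument is correct, but it takes a far more elaborate route than the paper's. The paper disposes of Lemma~\ref{step3} by a direct multiplier computation: multiply the eigenvalue equation $\lambda u+u_x+u_{xxx}+\partial_x^{-1}u_{yy}=0$ by $\bar\lambda u$ and integrate over $\Omega$; the overdetermined boundary data in \eqref{spectralsystem} --- in particular the extra conditions $u_x(0,y)=u(x,L)=\partial_x^{-1}u_y(0,y)=0$ inherited from \eqref{compactuniquesscondition} --- annihilate every single boundary integral, so that $\int_\Omega uu_x=\int_\Omega uu_{xxx}=\int_\Omega u\,\partial_x^{-1}u_{yy}=0$ and hence $|\lambda|^2\int_\Omega u^2=0$, giving $u=0$. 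No Fourier decomposition, no Paley--Wiener, no Diophantine analysis.

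What you do instead is reproduce Rosier's strategy from \cite{Rosier}: separate variables in $y$, reduce each mode to a fourth-order ODE in $x$ with six homogeneous endpoint conditions, extend by zero, and read off an algebraic obstruction on the roots of the characteristic polynomial, which you then rule out by a congruence argument. This is a valid proof and it does buy you two things the paper's argument leaves slightly implicit: it handles all $\lambda\in\C$ (including $\lambda=0$) in one stroke, and it makes transparent \emph{why} no critical-length set appears --- the transverse term lifts each mode from a third-order to a genuinely fourth-order problem with $\mu_k>0$, and the resulting Vieta identity $2\prod_jN_j=-(2k+1)^2\sum_jN_j^2$ has no admissible integer solutions. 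But this is precisely the heavy machinery the paper's feedback design is meant to bypass: the whole point of Theorem~\ref{11th:ESLyap_2} is that with the chosen boundary feedback, the spectral problem carries enough extra conditions that a single integration by parts already closes it, with no restriction on $L$.
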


\begin{proof}
Let us introduce the notation $u(x,y) = u_0(x,y)$. Then we can rewrite our spectral problem~\eqref{spectralsystem} as
\begin{equation}\label{spectralsystem1}
\begin{cases}
		\lambda u(x,y) + u_{x}(x,y) +   u_{xxx}(x,y) + \partial_x^{-1} u_{yy}(x,y)	=0,\\
	u(0,y)=u(L,y)=0,\quad  u_{y}(x,L)=\beta u_{x}(x,L)=0,   \\ 
	u_{x}(L,y)=\alpha u_{x}(0,y)=0, \\
	 	u_{y}(x,0)=0, \quad 	u_{x}(0,y)=u(x,L)=\partial_x^{-1} u_{y}(0,y)=0.
\end{cases}
\end{equation}
{  Assume that \(u\) is a non-trivial solution that satisfies \eqref{spectralsystem1}. Then, by multiplying by \(\bar{\lambda} u(x, y)\) and integrating over \(\Omega\), it follows that
\begin{equation}
    \label{eqnewfinal1}
    0 \leqslant \int_{\Omega}|\lambda|^{2} u^{2} dxdy=\bar{\lambda}\left(-\int_{\Omega} u u_x d x d y-\int_{\Omega} u u_{x x x} d x d y-\int_{\Omega} u \partial^{-1}_x u_{y y} d x d y\right).
\end{equation}
Note that, from the boundary conditions in \eqref{spectralsystem1}, it yields
\begin{align*}
   & -\int_{\Omega} u u_x d x d y = -\frac12 \int_0^L \left( u^2(L,y)-u^2(0,y) \right) dy=0 \\
   &-\int_{\Omega} u u_{xxx} d x d y = \frac12 \int_0^L \left( u_x^2(L,y)-u^2_x(0,y) \right) dy=0 
\end{align*}
On the other hand, let us define (as before)  $f_x(x,y)=u_y(x,y)$. Thus, $f_{xy}(x,y)=u_{yy}(x,y)$, $f_y(x,y)=\partial_x^{-1}u_{yy}(x,y)$ and $f(x,y)=\partial_x^{-1} u_y(x,y)$. It is important to recognize that the definition of the non-local operator $\partial_x^{-1}$ suggests that $f_y(L,y)=f(L,y)=0$. Then, 
\begin{align*}
    -\int_{\Omega} u \partial^{-1}_x u_{y y} d x d y  =\int_{\Omega} u_y f d x d y-\int_{0}^{L} \left( u(x,L)f(x,L)- u(x,0)f(x,0)\right) d x
\end{align*}
By using the boundary conditions and \eqref{boundaryforf}, we get
\begin{align*}
    -\int_{\Omega} u \partial^{-1}_x u_{y y} d x d y 
    & =\int_{\Omega} u_y f d x d y = \int_{\Omega} f_x f d x d y = \frac{1}{2}\int_0^L \left( f(L,y) - f(0,y)\right) dy \\
    &=- \frac{1}{2}\int_0^L  \partial_x^{-1}u_{y}^2(0,y)dy=0
\end{align*}
Therefore, \eqref{eqnewfinal1} indicates that $
\int_{\Omega}|\lambda|^{2} u^{2} dxdy=0$. This result implies $u=0$, contradicting our assumption that $u$ was a non-trivial solution.}
\end{proof}

With these Lemmas, we find $u_0 = 0,$ leading to a contradiction as $\|u_0\|_{L^2(\Omega)} = 1$. Thus, we have successfully proven exponential stabilization.
\end{proof}

\begin{remark}[\bf The model without drift term]
The drift term $u_x$ naturally arises from the derivation of the equation in fluid mechanics. On the entire domain, this drift term can be eliminated by a Galilean transform. However, this is not feasible on the half-line or bounded domains due to boundary conditions. In control theory, it is known that by combining some appropriate boundary conditions and incorporating the drift term into the model, the phenomenon of the critical set appears. This imposes certain restrictions on the length size of the domain. This represents a novel observation for the KdV equation. Following this direction, it is possible to consider the KP equation without the presence of the drift term:

 \begin{equation}\label{eq:KP_1}
\begin{cases}
		u_t(x,y,t) +  u_{xxx}(x,y,t) + \partial_x^{-1} u_{yy}(x,y,t)=0, \\ 
	u(x,y,0)=u_0(x,y)
\end{cases}
\end{equation}
with the same boundary conditions:
 \begin{equation}\label{eq:KP_2}
\begin{cases}
	u(0,y,t)=u(L,y,t)=0 \,\,\  u_y(x,L,t)=\beta u_x(x,L,t), \\
	u_x(L,y,t)=\alpha u_x(0,y,t), \,\, u_y(x,0,t)=0.
\end{cases}
\end{equation}
All the calculations work for the system \eqref{eq:KP_1}-\eqref{eq:KP_2} except for Step 3, which is related to the nontrivial solution for the spectral problem when proving the observability inequality, specifically Lemma \ref{step3}. In this case, we need to slightly modify the argument in the following way:
\vglue 0.4cm
Let us introduce the notation $u(x,y)=u_0(x,y)$ and define $v_x(x,y)=u_{y,0}(x,y)$ such that $v(L,y) = 0$. Thus, we can rewrite our system \eqref{spectralsystem} (without $u_x$) as:
\begin{equation}\label{eq:spUniq0}
\begin{cases}
		-u_y(x,y) + v_x(x,y)  = 0  \\
		 v_y(x,y) + u_{xxx}(x,y)  = -\lambda u(x,y) 
\end{cases}
\end{equation}
with boundary conditions, 
\begin{equation*}
\begin{cases}
	u(0,y)=u(L,y)=0, \\
	u(x,L) = 0,\, u_{y}(x,L) = \beta u_{x}(x,L) = 0,  \\
	u_{x}(L,y)= \alpha u_{x}(0,y)=0, \, u_{y}(x,0) = 0,\\
	v(0,y) = 0.	
\end{cases}
\end{equation*}
The existence of a solution is guaranteed by Lemma~\ref{le:RedSpecP}. Now, we focus on establishing uniqueness for system \eqref{eq:spUniq0}. We assume that both $(u_1,v_1)$ and $(u_2,v_2)$ are solutions and define $u:= u_1-u_2$ and $v:= v_1 - v_2$. We analyze two cases: one where $\lambda = 0$ and another where $\lambda \neq 0$. When $\lambda=0$, we have that $-u_y(x,y) + v_x(x,y)  = 0$ and $ v_y(x,y) + u_{xxx}(x,y)  = 0.$ We use an approach by defining an energy functional. Let us define  
\begin{equation*}
\tilde{E}(u,v)(y) := \frac{1}{2} \int_0^L \left(  v^2(x,y) + u_x^2(x,y)\right)dx, \quad y \in (0,L),
\end{equation*}
 the energy associated to the system~\eqref{eq:spUniq0}. Note that, by multiplying the first and second equations of \eqref{eq:spUniq0} by $u_{xx}(x,y)$ and by $v(x,y)$, respectively, integrating over $x$ in the interval $(0,L)$, and summing the results, we can conclude that:
\begin{align*}
\int_0^L u_{xx}(x,y) \left(-u_y(x,y) + v_x(x,y)\right) 
	+ v(x,y)\left(
	 v_y(x,y)  + u_{xxx}(x,y)
	\right) \,dx= 0.
\end{align*}
Straightforward calculations and the boundary conditions imply  that 
$$
\frac{\partial}{\partial y} \tilde{E}(u,v) =
\left[ u_x(x,y) u_y(x,y) - u_{xx}(x,y)v(x,y)\right]_{x=0}^{x=L} = 0.
$$
Then,
\begin{equation*}
\tilde{E}(u,v)(y) = \frac{1}{2} \int_0^L v^2(x,y) + u_x^2(x,y) \,dx = C, \quad y \in (0,L).
\end{equation*}
Note that, by definition
$
v(x,y) = \partial_x^{-1} u_y(x,y) = -\int_x^L u_y(s,y)\,ds.
$
Thus, by the boundary condition $u_y(x,L)=0$ follows that
\begin{equation*}
v(x,L) = -\int_x^L u_y(s,L)\,ds = 0.
\end{equation*}
Since $v(x,L)=0$ and $u_x(x,L)= 0$ we get $C = 0$. Therefore,
\begin{equation*}
v(x,y) = 0 \quad\text{and}\quad u_x(x,y) = 0,\quad \forall x,y \in (0,L).
\end{equation*}
In particular, it follows that $v_1=v_2$ and notice that when $u_x(x, y) = 0,$ it implies that $u(x, y) = g(y)$. If we differentiate with respect to $y,$ we obtain $u_y(x, y) = g_y(y)$ for all $x, y \in (0, L).$ Then, we have that
\begin{equation*}
	0 =  \partial_x^{-1} u_y(x,y) = \int_x^L u_y(s,y)\,ds
	  =	 \int_x^L g_y(y) \,ds = (L-x) g_y(y).
\end{equation*}
Consolidating all information, we deduce that $g_y(y) = 0$, implying $u(x, y)$ is constant. This follows directly from the boundary conditions $u(x, y) = 0$, which implies $u_1 = u_2$.

In the next step, let us consider the case where $\lambda \neq 0.$ Observe that by multiplying the second equation in \eqref{eq:spUniq0} by $u(x, y, t),$ employing the variable change defined in \eqref{eq:spUniq0}, and integrating by parts over the domain $\Omega,$ we derive the following result:
\begin{equation*}
\begin{aligned}
-\int_{\Omega}  \lambda u^2(t)\,dx\,dy
	 &=  \int_{\Omega} u(t)v_y(t) \,dx\,dy
		+ \int_{\Omega} u(t)u_{xxx}(t) \,dx\,dy  \\
		&= - \frac{1}{2} \int_{\Omega} \left(v^2(t)\right)_x \,dx\,dy
		- \frac{1}{2} \int_0^L \left[u_x^2(x,y,t)\right]_{x=0}^{x=L} dy \\
	& = - \frac{1}{2} \int_0^L \left[v^2(x,y,t)\right]_{x=0}^{x=L}dy = 0.
\end{aligned}
\end{equation*}
This implies $u = 0$, consequently, $u_1 = u_2$. Similarly, from the equation, we conclude $v = 0$, meaning $v_1 = v_2$. Utilizing system uniqueness and recognizing zero as a valid solution, we deduce $u_0 = 0$.
\end{remark}

\section{Futher Comments}\label{sec:final}

In this work, we studied the exponential stabilization of the KP-II equation. Our main result, Theorem \ref{11th:ESLyap_2}, ensures the exponential stability of the linearized KP-II under a feedback mechanism. However, extending this result to the nonlinear KP-II with the term 
$uu_x$  (as in KdV) is challenging due to the lack of \emph{a priori}  $L^2$-estimates, mainly because of the domain's dimension. Additionally, the compactness-uniqueness argument focuses on existence and general stability, but issues like critical lengths may arise, requiring additional feedback inputs (see~\cite{Rosier, capistrano2024}). Furtermore, the exponential stabilization with a variation of a feedback-law can be considered. For example, considering an anti-damping mechanism as the boundary time-delay, we believe that a variation of the approach introduced in \cite{Baudouin, Capistrano2023} can give positive  answers to the long time behavior of the KP-II equation under damping and anti-damping mechanism, at least for the linear system.

\bmhead{Acknowledge} 
Mu\~noz was supported by FACEPE Ph.D. scholarship number IBPG-0909-1.01/20. The second author conducted this research during their visit to Universidad Nacional de Colombia Sede Manizales and expresses gratitude for warm hospitality.

\bmhead{Ethical Approval} Not applicable.

\bmhead{Funding} Mu\~noz received financial support from the FACEPE by the Ph.D. scholarship number IBPG-0909-1.01/20.

\section*{Declarations}

\bmhead{Competing Interests} I declare that the authors have no competing interests as defined by Springer, or other
interests that might be perceived to influence the results and/or discussion reported in this paper.

\end{document}